\newtheorem{theorem}[equation]{Theorem}
\newtheorem{lemma}[equation]{Lemma}
\newtheorem{remark}[equation]{Remark}
\newtheorem{corollary}[equation]{Corollary}
\newtheorem{proposition}[equation]{Proposition}
\numberwithin{equation}{section}
\theoremstyle{definition}
\newtheorem{definition}[equation]{Definition}
\newtheorem{example}[equation]{Example}
\theoremstyle{remark}
\newtheorem{sideremark}[equation]{Side-remark}
\newcommand{\ZZ}{\mathbb Z}
\newcommand{\X}{\mathfrak X}
\newcommand{\Div}{\mathop{\mathrm{Div}}}
\newcommand{\Hom}{\mathop{\mathrm{Hom}}\nolimits}
\newcommand{\Ext}{\mathop{\mathrm{Ext}}\nolimits}
\newcommand{\Isom}{\mathop{\mathrm{Isom}}\nolimits}
\newcommand{\Mor}{\mathop{\mathrm{Mor}}\nolimits}
\newcommand{\inv}{\mathrm{inv}}
\newcommand{\id}{\mathrm{id}}
\newcommand{\pt}{\mathrm{pt}}
\newcommand{\pr}{\mathrm{pr}}
\newcommand{\Spec}{\mathop{\mathrm{Spec}}}
\newcommand{\Span}{\mathop{\mathrm{Span}}\nolimits}
\newcommand{\CH}{\mathop{\mathrm{CH}}}
\newcommand{\Pic}{\mathop{\mathrm{Pic}}}
\newcommand{\GalDec}{\mathop{\mathrm{Dec}}\nolimits}
\newcommand{\triv}{\mathop{\mathbf{t}}\nolimits}
\newcommand{\trivinv}{\mathop{\mathbf{d}}\nolimits}
\newcommand{\trivinvinv}{\mathop{\mathbf{i}}\nolimits}
\newcommand{\Gal}{\mathop{\mathrm{Gal}}}
\newcommand{\Aut}{\mathop{\mathrm{Aut}}}
\newcommand{\cd}{\mathop{\mathrm{cd}}\nolimits}
\newcommand{\cdg}{\mathop{\mathfrak{cd}}\nolimits}
\newcommand{\Br}{\mathop{\mathrm{Br}}\nolimits}
\newcommand{\trdeg}{\mathop{\mathrm{trdeg}}\nolimits}
\newcommand{\Sch}{\mathcal Sch}
\newcommand{\StSch}{\mathcal St\mathcal Sch}
\newcommand{\kdel}[1]{}
\newcommand*{\uline}[1]{{\lefteqn{#1}\underline{\hphantom{#1}}}}
\renewcommand{\appendixname}{Appendix}
\def\@seccntformat#1{\csname the#1\endcsname. }
\begin{document}

\title{An estimate of canonical dimension of groups based on Schubert calculus}
\author{Rostislav Devyatov\thanks{Laboratory of Algebraic Geometry and its Applications,
Department of Mathematics,
National Research University Higher School of Economics,
6 Usacheva str.,
Moscow 119048,
Russian Federation.\newline\textit{Email address:} \texttt{deviatov@mccme.ru} .}}
\maketitle

\begin{abstract}
We sketch the proof of a connection between the canonical (0-)dimension of semisimple split simply connected groups and cohomology of their full flag varieties.
Using this connection, we get a new estimate of the canonical (0-)dimension of simply connected split exceptional groups of type $E$ understood as a group.
\end{abstract}

\section{Introduction}
To define the canonical (0-)dimension of an algebraic group understood as a group, 
we first need to define the canonical (0-)dimension of a scheme understood as a scheme (which is a different definition).
Roughly speaking, the canonical (0-)dimension of a scheme is a number indicating how hard it is to get a rational point in the scheme.
The canonical (0-)dimension of an algebraic group shows how hard it is to get rational points in torsors related to the group.

To be more precise, let us fix some conventions and give some definitions.
All schemes in the present text are understood in the usual sense of algebraic geometry, are of finite type over a fixed base field, and are separated.
The words ``scheme'' and ``variety'' are used interchangeably, in the same sense.
The base field is arbitrary.

Speaking of canonical dimension of schemes, there are two closely related notions in the literature:
the \emph{canonical 0-dimension of a scheme} defined in \cite{merkurjevupd3}
and the \emph{canonical dimension of a scheme} defined in \cite{karpenkomerkurjevadvances}.
These two definitions are not known to be always equivalent, but they are equivalent 
for two particular classes of schemes:
for smooth complete schemes and for torsors of split reductive groups
(see \cite[Theorem 1.16, Remark 1.17, and Example 1.18]{merkurjevcontmath}).
The definition from \cite{merkurjevupd3} looks more motivated, so we are going to use it.
\begin{definition}[{\cite[Section 4a, first paragraph of Section 4b, and the last paragraph of Section 2a]{merkurjevupd3}}]\label{candimdefaultdef}
Given a scheme $X$ over a field $K$, the canonical 0-dimension of $X$ understood as a scheme 
(notation: $\cd(X)$)
is:
\begin{equation*}
\cd (X) = \max_{\substack{L = {}\text{a field containing $K$}\\X_L\text{ has a rational point}}} \,\,\,
\min_{\substack{L_0 = {}\text{a subfield of $L$}, \,\,K \subseteq L_0 \\ X_{L_0} \text{ still has a rational point}}}
\trdeg_K{L_0}.
\end{equation*}
This maximum is known to be always finite, moreover, $\cd (X) \le \dim X$, 
see \cite[\S 4b]{merkurjevupd3} or \cite[\S 1]{merkurjevcontmath}.
\end{definition}
A bit less formally, canonical dimension can be explained as follows. Suppose we have expanded the base field
$K$ to $L$, and got a rational point in $X_L$. How large can $L$ be, compared to $K$? In general, it can be very large, 
this is unbounded. 
A related question
with a finite answer is: how many algebraically independent generators do we have 
to keep, at worst (for the worst $L$), to still have a rational point after the scalar extension
(\emph{not necessarily the same} rational point that we found after expanding scalars to $L$)? This number of generators 
is the canonical dimension of $X$. For more properties of canonical dimension, see \cite{merkurjevupd3} in the case of general $X$
and \cite{karpenkoimc} in the case of smooth projective $X$.

We have underlined above that we want to get a rational point over a field between $K$ and $L$, but not necessarily the same rational point.
If we demanded to get the same rational point, we would get the definition of the \emph{essential 0-dimension} of a scheme, 
which is known to coincide with the (standard in algebraic geometry) dimension, see \cite[Proposition 1.2]{merkurjevcontmath}.
This can be viewed as a motivation for the word ``dimension''. 
(But essential dimension is not only defined for schemes, and in broader generality it becomes a much more nontrivial notion.)


Another motivation for canonical (0-)dimension comes from \emph{incompressible varieties}.
This motivation is only valid for 
the canonical (0-)dimension of smooth complete schemes.
A smooth 
complete
variety $Y$ is called \emph{incompressible} if any rational map $Y \dashrightarrow Y$ is dominant.
If $X$ is a smooth complete variety, then by \cite[Theorem 1.7]{merkurjevcontmath}, $\cd (X)$ is the smallest dimension 
of the image of a rational map $X \dashrightarrow X$. 
In other words, $\cd (X)$ is the smallest dimension of an incompressible subvariety 
$Y \subseteq X$ such that there exists a rational map $X \dashrightarrow Y$.

The second object we need to define before we can define the canonical 0-dimension of a group is a torsor of a group.
All algebraic groups in this text are affine.
All reductive, semisimple, and simple groups in this text are smooth and connected.
Torsors of algebraic groups (over a point) are, informally speaking, homogeneous spaces that are ``as large as the group itself''. 
This notion is mostly interesting over non-algebraically closed fields.
\begin{definition}[{\cite[Section 3a]{merkurjevupd3}}]
Given an algebraic group $G$, a \emph{$G$-torsor over a point} (or simply a \emph{$G$-torsor})
is a scheme $E$ with an action $\varphi \colon G \times E \to E$
such that\footnote{In this paper, we suppose that all actions of algebraic groups, including the actions on torsors, satisfy 
the ``left'' (``standard'') axiom of action. Formally: if we denote by $\varphi_g := \varphi|_{\{g\} \times E} \colon E \to E$ the action of 
a single element (a rational point) $g \in G$, then $\varphi_{g_1 g_2} = \varphi_{g_1} \circ \varphi_{g_2}$ for any $g_1, g_2 \in G$ 
(and not $\varphi_{g_1 g_2} = \varphi_{g_2} \circ \varphi_{g_1}$). 
Informally (or in hom-functorial/Yoneda notation): $(g_1 g_2) \cdot e = g_1 \cdot (g_2 \cdot e)$, not $e^{g_1 g_2} = (e^{g_1})^{g_2}$.
This is not important here in the Introduction, but will be important when we work with this action explicitly later.}
$(\varphi, \mathrm{pr}_2) \colon G \times E \to E \times E$, where $\mathrm{pr}_2$ is the projection to 
the second factor, is an isomorphism.
\end{definition}
It is known that all torsors of affine algebraic groups over a point are affine.
However, torsors quite often don't have any rational points.

Finally, the canonical 0-dimension of an algebraic group understood as a group measures 
how hard it is to get rational points in torsors, informally speaking, related to the group.
Precisely:
\begin{definition}[{\cite[Section 4g]{merkurjevupd3}}]\label{candimgroup}
Given an algebraic group $G$ over a field $F$, 
the canonical 0-dimension of $G$ understood as a group (notation: $\cdg (G)$)
is
\begin{equation*}
\cdg(G) = \max_{K = \text{a field containing $F$}} \,\,\, \max_{E = \text{a $G_K$-torsor}} \cd (E).
\end{equation*}
Again, the (usual) dimension of a torsor is known to be always equal to the dimension of the group, so here, the maximum is also finite, 
and also $\cdg(G) \le \dim G$.
\end{definition}

The definition of canonical dimension of an algebraic group understood as a group in \cite[Introduction]{karpenkomerkurjevadvances}
repeats this definition almost exactly, with the only difference being that 
instead of $\cd (E)$ it uses the definition of canonical dimension of $E$ understood as a scheme 
from the paper \cite{karpenkomerkurjevadvances} itself. But as we already mentioned above, 
it is known that these two notions are 
equivalent for torsors of split reductive groups.
So, Definition \ref{candimgroup} is also equivalent to the definition of canonical dimension of a group from 
\cite[Introduction]{karpenkomerkurjevadvances} for split reductive groups. 
All groups whose canonical dimension we are going to estimate in this text are split reductive (and even simply connected semisimple),
so these results also estimate the canonical dimension in the sense of \cite[Introduction]{karpenkomerkurjevadvances}.

Speaking of the motivation for the canonical dimension of a variety coming from incompressible varieties, 
its relation to Definition \ref{candimgroup} is not clear right away: the varieties whose canonical dimension we 
measure in Definition \ref{candimgroup} are $G_K$-torsors, and 
they are affine, 
not complete.
But a connection with incompressible varieties still exists: if $G$ is simply connected and split semisimple (in fact, split reductive is enough), 
given a $G_K$-torsor $E$, one can define 
\emph{the quotient of $E$ modulo a Borel subgroup} (see Definition \ref{defemodb}). 
This quotient is smooth projective, and it is known that its canonical dimension equals $\cd(E)$ (see Corollary \ref{cdeebequal}).
So, we can say informally and approximately that 
the canonical dimension of a group measures how large, depending on a torsor, 
the smallest incompressible subvariety in the quotient of the torsor modulo the Borel subgroup can be. 
And precisely, we don't look at all incompressible subvarieties $Y$ of a given quotient $X$, 
but only at such $Y$s that there exists a rational map $X \dashrightarrow Y$.

Now, with all of this motivation, before we can formulate the main goal of this text precisely,
we need to introduce some more notation and terminology.
Given a split semisimple algebraic group $G$ and a Borel subgroup $B$, the corresponding Weyl group $W$, and 
the element $w_0 \in W$ of maximal length, 
for each $w \in W$ we denote the Schubert variety $\overline{B w_0 w^{-1} B / B} \subseteq G/B$ by $Z^w$.
This $Z^w$ is a Schubert divisor if and only if $w$ is a simple reflection, and we denote all Schubert divisors 
by $D_1, \ldots, D_r$, where $1,\ldots, r$ are the indices of
the simple roots of $G$.

It is known that the classes $[Z^w] \in \CH^* (G/B)$ for all $w \in W$ form a free set of generators of $\CH^*(G/B)$ as 
of an abelian group.
We say that a product of classes of Schubert divisors 
$[D_1]^{n_1}\ldots [D_r]^{n_r}$
is \emph{multiplicity-free} if there exists $w \in W$ 
such that the coefficient at $[Z^w]$ 
in the decomposition of $[D_1]^{n_1}\ldots [D_r]^{n_r}$
into a linear combination of Schubert classes equals 1.

Now we can formulate the goal of this text precisely. Our goal is to sketch the proof of the following theorem.
\begin{theorem}\label{maintheorem}
Let $G$ be a split semisimple simply connected algebraic group over an arbitrary field, let $B$ be a Borel subgroup, 
let $r$ be the rank of $G$, and let $D_1, \ldots, D_r \subset G/B$ be 
the Schubert divisors corresponding to the $r$ simple roots of $G$.
If $[D_1]^{n_1}\ldots [D_r]^{n_r}$ is a multiplicity-free product of Schubert divisors, 
then $\cdg (G) \le \dim (G/B) - n_1 -\ldots - n_r$.
\end{theorem}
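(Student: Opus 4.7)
The plan is to bound $\cd_0(E)$ uniformly in the $G_K$-torsor $E$, passing to the twisted flag variety and exploiting the multiplicity-free Chow relation.

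\textbf{Step 1 (Reduction to twisted flag varieties).} For a field $K \supseteq F$ and a $G_K$-torsor $E$, form the twisted flag variety $X_E := E/B$. Since $G$ is split reductive, the Borel $B$ is special: by Hilbert 90 for the split torus $T$ and the vanishing of Galois cohomology for the split unipotent radical, every $B$-torsor is trivial. Hence $E(L) \neq \emptyset \iff X_E(L) \neq \emptyset$ for every $L/K$, which gives $\cd_0(E) = \cd_0(X_E)$. The theorem thus reduces to the uniform bound $\cd_0(X_E) \le \dim(G/B) - (n_1+\cdots+n_r)$.

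\textbf{Step 2 (Extracting a subvariety).} Because $G$ is split and simply connected, the Galois action on $\Pic(G/B) \cong X^*(T)$ is trivial, so each divisor class $[D_i]$ descends to $\Pic(X_E)$. Form $\alpha := [D_1]^{n_1}\cdots[D_r]^{n_r} \in \CH(X_E)$ and represent it by an effective cycle $\sum m_j[Y_j]$ with integral $Y_j \subseteq X_E$. Base-change to $\bar K$, where $\CH((X_E)_{\bar K}) = \CH((G/B)_{\bar K})$ and $\alpha_{\bar K} = [Z_w] + \sum_{v \neq w} c_v[Z_v]$ has coefficient $1$ at $[Z_w]$. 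Since effective cycles on $G/B$ have non-negative Schubert coordinates (a consequence of degeneration along the $T$-action to the Bruhat fixed-point stratification), the identity $\sum_j m_j a^{(j)}_w = 1$ forces a unique integral component $Y := Y_{j_0}$ with $m_{j_0}=1$ whose class $[Y]_{\bar K}$ contains $[Z_w]$ with coefficient $1$. This $Y$ has the right dimension $\dim(G/B) - (n_1+\cdots+n_r)$.

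\textbf{Step 3 (From $Y$ to the bound).} To conclude $\cd_0(X_E) \le \dim Y$, exhibit a dominant rational map $X_E \dashrightarrow Y$, or equivalently an $L$-point of $Y$ whenever $X_E(L) \neq \emptyset$: the residue field of any such point has transcendence degree $\le \dim Y$ over $K$ and still gives an $L$-point of $X_E$ via the inclusion $Y \subseteq X_E$. Over $L$ the torsor trivializes, identifying $(X_E)_L \cong (G/B)_L$, and the Galois-stable multiplicity-$1$ Schubert component of $Y$ becomes, under this identification, a translate of $Z_w$; the $B$-fixed point of $G/B$ then supplies the desired rational point.

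\textbf{Main obstacle.} Step 3 is the technical core. The identification $(X_E)_L \cong (G/B)_L$ depends on a choice of trivialization of $E_L$, so $Y_L$ is not literally a specific Schubert variety in $(G/B)_L$: the Schubert decomposition of $[Y]_{\bar K}$ is only a Chow-class statement. The clean route is motivic: use that $X_E$ is geometrically split (its motive becomes a sum of Tate motives over a splitting field) and invoke Rost nilpotence to lift the projector in $\CH((G/B)_{\bar K} \times (G/B)_{\bar K})$ onto the Tate summand corresponding to $[Z_w]$ (available thanks to the multiplicity-$1$ condition) to an idempotent correspondence in $\CH(X_E \times X_E)$. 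This exhibits a direct summand of the Chow motive of $X_E$ of dimension $\dim Z_w$ whose canonical dimension bounds that of $X_E$, yielding the stated estimate.
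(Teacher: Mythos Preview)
Your outline has the right shape but two of the three steps contain genuine gaps, and your proposed fix for the third is both heavier and less complete than what the paper actually does.

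In Step~2, the assertion that the $[D_i]$ descend to $\Pic(X_E)$ does not follow from triviality of the Galois action alone: that only tells you the image of $\Pic(X_E)\to\Pic((X_E)_{\bar K})$ lands in the invariants, not that this map is surjective. The obstruction lives in $\Br(K)$, and the paper spends all of Section~2 killing it (Proposition~\ref{propositionpicards}), using that $E/B$ is the variety of Borel subgroups of an inner form of $G$ together with the Merkurjev--Tignol computation of $\ker\bigl(\Br(K)\to\Br(K(E/B))\bigr)$. Second, you write ``represent $\alpha$ by an effective cycle'' with no justification. Each $\alpha_i\in\CH^1(X_E)$ is effective by the codimension-one lemma (the paper's Lemma~\ref{scalarextpreservesnonneg}), but effectivity of the \emph{product} is not automatic: the paper invokes Fulton's nonnegative-intersection theorem (Theorem~\ref{nonnegativeintersection}), which applies because the tangent bundle of $E/B$ is globally generated (Lemma~\ref{ebisgbgs}).

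You are right that Step~3 is the crux, and right that a Chow-class coefficient does not locate a rational point on $Y$. The paper does \emph{not} go through Rost nilpotence or motivic decompositions. Instead it works over $L=K(E/B)$, where $X_E$ trivializes, and multiplies $g\bigl(\sum c_i[Y_i]\bigr)$ by the Poincar\'e-dual Schubert class $[Z_{v'}]_L$ (with $v'=vw_0$), so that the product equals $[\pt]$ by the multiplicity-free hypothesis and Demazure's duality. A second application of the nonnegative-intersection theorem shows each term $g([Y_i])\cdot[Z_{v'}]_L$ is a nonnegative $0$-cycle supported on $f((Y_i)_L)\cap (Z_{v'})_L$; the degree map then forces the whole sum to be a single rational point, lying in some $f((Y_i)_L)$. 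Hence that $(Y_i)_L$ has an $L$-point, and \cite[Corollary~4.7]{karpenkomerkurjevadvances} gives $\cd(E/B)\le\dim Y_i$. This is elementary intersection theory plus a degree count; your motivic route might be made to work, but you would still have to explain which correspondence you are lifting and why a Tate summand in the right twist actually bounds canonical dimension, and neither point is addressed.
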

%
As a corollary of this theorem and \cite[Theorem 8.6]{multiplicityfreepreprint}, we will immediately get the following.
\begin{corollary}\label{corollarye}
Let $G$ be a split simple simply connected algebraic group of type $E_r$.
Then $\cdg (G) \le 17$, $37$, or $86$ for $r = 6$, $7$, or $8$, respectively.\qed
\end{corollary}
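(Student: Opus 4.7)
The plan is to derive the corollary as a direct application of Theorem~\ref{maintheorem} combined with the multiplicity-free products of Schubert divisors supplied by \cite[Theorem 11.5]{multiplicityfreepreprint}. First, I would check that the simply connected split group of type $E_r$ over an arbitrary field satisfies the hypotheses of Theorem~\ref{maintheorem} (split, semisimple, simply connected, of rank $r$), and I would record the standard fact that $\dim(G/B)$ equals the number of positive roots of the root system: namely $36$, $63$, and $120$ for $r = 6$, $7$, and $8$ respectively.

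Next, I would invoke \cite[Theorem 11.5]{multiplicityfreepreprint}, which furnishes, in each of the three exceptional types, an explicit multiplicity-free product of Schubert divisors $[D_1]^{n_1}\cdots [D_r]^{n_r}$ whose total degree $n_1 + \ldots + n_r$ equals $19$, $26$, and $34$ for $E_6$, $E_7$, and $E_8$ respectively. Feeding these data into the conclusion of Theorem~\ref{maintheorem} yields $\cdg_0(G) \le 36 - 19 = 17$, $\cdg_0(G) \le 63 - 26 = 37$, and $\cdg_0(G) \le 120 - 34 = 86$, which is exactly the statement of the corollary.

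The deduction itself is purely formal. The real substance is hidden in the cited \cite[Theorem 11.5]{multiplicityfreepreprint}, where the multiplicity-free products of the required (nearly maximal) total degrees are actually constructed by a Schubert-calculus analysis specific to the exceptional types; that construction, together with the proof of Theorem~\ref{maintheorem} itself, will be the main obstacle hidden behind this otherwise routine application.
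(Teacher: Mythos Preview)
Your proposal is correct and matches the paper's approach exactly: the paper states the corollary as an immediate consequence of Theorem~\ref{maintheorem} together with \cite[Theorem~11.5]{multiplicityfreepreprint}, and your computation $\dim(G/B) - (n_1+\cdots+n_r) = 36-19$, $63-26$, $120-34$ is precisely the intended deduction.
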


The most difficult part of estimating the canonical dimension of simply connected split groups of type 
$E_r$ (and in obtaining Corollary \ref{corollarye}) 
was actually to understand which products of Schubert divisors are multiplicity-free 
(and this was understood in \cite{multiplicityfreepreprint} by the author). 
The description of multiplicity-free products of Schubert divisors in \cite{multiplicityfreepreprint}
is explicit enough to find the maximal degree of such a multiplicity-free product precisely.
However, for the canonical dimension we still get only an estimate from above, 
because Theorem \ref{maintheorem} can only produce upper estimates anyway.
In other words, in view of Theorem \ref{maintheorem}, a lower bound 
of the maximal degree of a multiplicity-free product of Schubert divisors implies an upper bound of the canonical dimension of the group.
And an upper bound 
of the maximal degree of a multiplicity-free product of Schubert divisors puts a lower limit on the upper bounds of the canonical dimension 
that can be obtained with this technique.

The part of the argument establishing relation between Schubert calculus and canonical dimension 
(in other words, the proof of Theorem \ref{maintheorem} itself)
was known to the experts 
in the area
(or at least they believed that the argument is doable this way).
However, we were unable to find an exposition suitable for more general mathematical audience.
The present paper contains such an exposition.
In this text, we are going to follow the ideas of several proofs from \cite{karpenkoduke},
where canonical dimension was related to cohomology of flag varieties of orthogonal groups 
(more precisely, orthogonal Grassmannians, not full flag varieties).

Speaking of 
the canonical dimension of simply connected split simple groups of other types, 
in types $A_r$ and $C_r$ the canonical dimension is known to be zero. 
For types $B_r$ and $D_r$, the canonical dimension was estimated (and computed exactly if $r$ is a power of 2) 
by N.~Karpenko in \cite{karpenkoduke}.
In type $D_r$, even though the maximal degree of a multiplicity-free product of Schubert divisors 
is also found precisely in \cite{multiplicityfreepreprint},
the resulting estimate of the canonical dimension from Theorem \ref{maintheorem} 
turns out to be the same as Karpenko's estimate ${} \le (r-1)(r-2)/2$.
Karpenko's estimate for type $B_r$ is $r(r-1)/2$.
For type $G_2$, the canonical dimension (of a split simply connected group) is known and equals 3, see \cite[Example 10.7]{berhuyreichstein}.

After (the first version of) this text was posted on arXiv, the examples of multiplicity-free monomials 
found in \cite{multiplicityfreepreprint} for groups of type $A$, $D$, and $E$, were generalized to 
groups of other types in 
\cite{zaynullinbcfg}.
In particular, in \cite{zaynullinbcfg}, a monomial
of degree 11 for type $F_4$ was found, and therefore, an estimate ${} \le 13$
for the canonical dimension of a split simply connected group $G$ of type $F_4$ was obtained. However, 
the question of the maximal possible degree of a multiplicity-free monomial in type $F_4$ remains open. 
Therefore, it might be possible to get a better estimate for $\cdg(G)$ using the same technique of multiplicity-free monomials and
Theorem \ref{maintheorem} if one finds a multiplicity-free monomial of a higher degree (higher than 11).
The same open question exists in type $B_r$.

The main results of the present text were preliminarily announced in a short note \cite{multiplicityfreecandimshortnote} by the author.

\subsection*{Acknowledgments}
I thank Kirill Zainoulline for bringing my attention to the problem.
I thank Nikita Karpenko for useful discussions and explanations about theory of torsors and canonical dimension 
and about his paper \cite{karpenkoduke}.
I also thank Vladimir Chernousov and Alexander Merkurjev
for useful discussions about torsors and theory of Galois descent.
I also thank the following people for discussions about intersection theory and algebraic geometry in general:
Stephan Gille, 
Nikita Semenov, 
Alexander Vishik, and Bogdan Zavyalov.
I thank Peter Scholze for useful discussions of the categorical approach to 
algebraic geometry over non-algebraically closed fields.
I thank Ivan Panin for suggesting me to think about Galois descent for line bundles, which helped me to simplify 
the proof of Proposition \ref{propositionpicards}.
I thank Marat Rovinskiy for useful discussions about \'etale sheaves.

\subsection*{Funding}
This research was partly supported by the Pacific Institute for the Mathematical Sciences fellowship.
The author also thanks Max Planck Institute for Mathematics in Bonn for its financial support and hospitality.
This work was partially completed at the Department of Mathematical Sciences, KAIST and 
was partially supported by Samsung Science and Technology Foundation under Project Number SSTF-BA1901-02.
Support from the Basic Research Program of HSE University is gratefully acknowledged (HSE-BR-2025-061),
and the author especially thanks Laboratory of Algebraic Geometry and its Applications of HSE University for hospitality.

\section{Preparation 1: Recall of basic Galois descent theory}
%
To study torsors (and schemes over a non-algebraically-closed field in general), 
we will make a lot of use of Galois descent theory. 
We will need two versions of this theory: for vector spaces and for schemes.

The version for vector spaces is quite simple. Suppose we have a finite Galois extension of fields $K \subseteq L$ with Galois group $\Gamma$.
\begin{definition}
Let $V$ and $W$ be two $L$-vector spaces, and let $\sigma \in \Gamma$. A map (of sets) $f \colon V \to W$ is called 
\emph{$\sigma$-semilinear} if $f (a_1 v_1 + a_2 v_2) = \sigma (a_1) f(v_1) + \sigma(a_2) f(v_2)$ for all $a_1, a_2 \in L$ and $v_1, v_2 \in V$.
\end{definition}
\begin{definition}
Let $V$ be an $L$-vector space. A \emph{semirepresentation} of $\Gamma$ on $V$ 
is an action $\psi \colon \Gamma \times V \to V$ on $V$ \textbf{understood as a set} such that 
for each $\sigma \in \Gamma$, the map $\psi|_{\{\sigma\} \times V} \colon V \to V$ is $\sigma$-semilinear.
\end{definition}
\begin{example}\label{stdrepresuv}
Let $U$ be a $K$-vector space. Then we can define a $\Gamma$-semirepresentation on $V = L \otimes_K U$ by the formula 
$\sigma (a \otimes u) = \sigma (a) \otimes u$ for all $a \in L$ and $u \in U$: 
the formula defines a $K$-bilinear map, so it can be extended to the whole $L \otimes_K U$.

This semirepresentation will be called the \emph{standard representation} of $\Gamma$ on $L \otimes_K U$.
\end{example}
Given a semirepresentation of $\Gamma$ on an $L$-vector space $V$, we can define the \emph{dual} semirepresentation
of $\Gamma$ on $V^*$ by the formula $(\sigma f)(v) = \sigma (f (\sigma^{-1}(v)))$ for all $\sigma \in \Gamma$, $f \in V^*$, and $v \in V$. 
A direct computation shows that this action indeed produces elements of $V^*$ out of elements of $V^*$,
and one more direct computation shows that this is a semirepresentation.
We can further induce a semirepresentation of $\Gamma$ on the symmetric algebra $S^\bullet (V^*)$ by saying that 
$\sigma (fg) = (\sigma f)(\sigma g)$.

We will need the following well-known fact about semirepresentations, which is sometimes called Hilbert's Theorem 90.
\begin{theorem}\label{hilbertninety}
Let $K$, $L$, $\Gamma$ be as above.
Suppose we have a semirepresentation of $\Gamma$ on an $L$-vector space $V$. 
Then $V^\Gamma$ is a $K$-vector space, and the (obvious) map $L \otimes_K V^\Gamma \to V$, $a \otimes v \mapsto av$, 
is an isomorphism. \qed
\end{theorem}

Now let us recall the basic notions and facts of Galois descent theory for schemes.
We will need three categories.
The first category, $\Sch_K$, is the category of (separated and of finite type, as everywhere in the text) schemes over a field $K$.

To define the second category, suppose we have a finite extension of fields $K \subseteq L$. 
First, we need to recall the definition of the functor of restriction of scalars from $\Sch_L$ to $\Sch_K$.
If $X$ is an object of $\Sch_L$, we say that 
$X$ \emph{with scalars restricted from $L$ to $K$} is the scheme that has the same topological space as $X$, 
the same ring of regular functions on each open subset \emph{as an abstract ring}, but for the algebra structure, 
we view this ring as a $K$-algebra rather than an $L$-algebra (the multiplication by elements of $K$ is given by the embedding $K \subseteq L$).
We denote this scheme by $X|_K$.
And if $f \in \Mor_{\Sch_L} (X, Y)$, then one can check directly that the same map of topological spaces as in $f$, 
together with the same map of abstract rings for each open subset of $Y$ ($=$ each open subset of $Y|_K$) as in $f$,
satisfies the definition of a morphism of $K$-schemes from $X|_K$ to $Y|_K$.

Now we can say that the second category, which 
we will call \emph{the category of $K,L$-schemes} (notation: $\Sch_{K,L}$),
has schemes over $L$ as objects, and the set $\Mor_{\Sch_{K,L}} (X, Y)$, where $X$ and $Y$ are $L$-schemes, 
is the set of morphisms of $K$-schemes from $X|_K$ to $Y|_K$.

The third category will be introduced a bit later.

\begin{example}\label{sigmastarexample}
Let $K \subseteq L$ be a finite Galois extension of fields, and let $\sigma \in \Gal (L/K)$.
Let $X=\Spec L$. Then $K[X|_K]=L$, and $\sigma^{-1}\colon L \to L$ is an automorphism of this $K$-algebra.
It defines 
the dual automorphism of the $K$-scheme $X|_K$, which we denote by 
$\sigma_* \in \Mor_{\mathcal Sch_{K,L}}(\Spec L, \Spec L)$.

We keep the notation $\sigma_*$ until the end of the text.
\end{example}
\begin{definition}\label{defsemilinearmorphism}
Let $K \subseteq L$ be a finite Galois extension of fields with Galois group $\Gamma$, and let $\sigma \in \Gamma$. 
A morphism $f \colon X \to Y$ in $\mathcal Sch_{K,L}$ is called
\emph{$\sigma$-semilinear} if the following diagram (in $\mathcal Sch_{K,L}$) is commutative:
\begin{equation*}
\xymatrix{
X \ar[r]^{f} \ar[d] & Y \ar[d] \\
\Spec L \ar[r]^{\sigma_*} & \Spec L
}
\end{equation*}
The vertical arrows are the restrictions of scalars of the structure morphisms.
%
\end{definition}
Clearly, under the conditions of this definition, if $f$ (resp.\ $g$) is a $\sigma$ (resp.\ $\tau$)-semilinear morphism, 
then $g \circ f$ is a $(\tau\sigma)$-semilinear morphism.
It is also clear that $1_\Gamma$-semilinear morphisms are exactly the restrictions of scalars of the morphisms in $\Sch_L$.
\begin{definition}[{see \cite[\S V.4.20]{serreclass}}]\label{serretwist}
Let $L/K$ be a finite Galois extension of fields, and let $\sigma \in \Gal (L/K)$.
Let $X$ be an $L$-scheme. Similarly to $X|_K$, let us define the \emph{$L$-scheme obtained from $X$ by means of $\sigma$} 
(notation: $X^\sigma$) as follows. We take the same topological space as for $X$ and the same ring of 
regular functions on each open subset as an abstract ring. But for the $L$-algebra (i.e. $L$-vector space) structure, 
if $U^\sigma \subseteq X^\sigma$ is an open subset identified with $U \subseteq X$, whenever we want to multiply a regular function 
$f \in L[U^\sigma]$ by an element $a \in L$, we say that the result is $\sigma(a)f$ in the sense of the $L$-algebra structure on $L[U]$.

Then the identity map $X^\sigma \to X$ (more precisely, the identity map on the topological space and on all rings of regular 
functions viewed as abstract rings) becomes a $\sigma$-semilinear morphism in $\mathcal Sch_{K,L}$. 
We denote this morphism by $\sigma_{X^\sigma,X} \colon X^\sigma \to X$.
\end{definition}
\begin{definition}\label{defsigmainverse}
Let $K \subseteq L$ be a finite Galois extension of fields with Galois group $\Gamma$.
We will say that we have a 
\emph{Galois-semiaction} of $\Gamma$ on 
an $L$-scheme $X$
(or that $\Gamma$ \emph{Galois-semiacts} on $X$)
if we have an action 
$\psi \colon \Gamma \times X|_K \to X|_K$ 
(here $\Gamma$ is understood as an algebraic group over $K$)
such that for each $\sigma \in \Gamma$, the automorphism 
$\psi|_{\{\sigma\} \times X|_K}$ 
of $X|_K$,
understood as an automorphism of $X$ in $\mathcal Sch_{K,L}$,
is $\sigma$-semilinear.
For simplicity of notation, given a Galois-semiaction $\psi$ and an element $\sigma \in \Gamma$, 
we will 
write $\psi_{\sigma}$ instead of $\psi|_{\{\sigma\} \times X|_K}$.

We say that a finite affine open covering of $X$ is \emph{$\Gamma$-stable} if $\Gamma$ preserves (normalizes) each of these open sets.
\end{definition}

\begin{example}\label{actinducedbyrep}
Let $V$ be a $L$-vector space equipped with a semirepresentation of $\Gamma$. 
Then, informally speaking one can ``extend this semirepresentation to a semiaction of $\Gamma$ on $V$ understood as a scheme''.

More formally, consider the dual semirepresentation of $\Gamma$ on $V^*$ and the induced semirepresentation on $S^\bullet (V^*)$.
Then for each $\sigma \in \Gamma$, 
the action of $\sigma^{-1}$ on $S^\bullet (V^*)$ is a $\sigma^{-1}$-semilinear map of vector spaces 
$S^\bullet (V^*) \to S^\bullet (V^*)$, 
and a direct check shows that the dual morphism of schemes $\Spec (S^\bullet (V^*))|_K \to \Spec (S^\bullet (V^*))|_K$
is a $\sigma$-semilinear morphism in $\Sch_{K,L}$. 
Another direct check shows that these semilinear morphisms together (for all $\sigma \in \Gamma$) form 
a Galois-semiaction on $\Spec (S^\bullet (V^*))$ (which is a formal way of ``viewing $V$ as a scheme'')
and that on the rational points, this Galois-semiaction coincides with the original semirepresentation of $\Gamma$ on $V$ understood as a vector space.

We will say that this Galois-semiaction is \emph{induced} by the original semirepresentation of $\Gamma$ on $V$.
\end{example}

\begin{definition}\label{definitioninduced}
Let $V$ be a $L$-vector space equipped with a semirepresentation of $\Gamma$. 
Let $X$ be a subscheme of $V$ preserved (normalized) by the induced Galois-semiaction on $V$ understood as a scheme 
(denote this Galois-semiaction by $\psi \colon \Gamma \times V|_K \to V|_K$).
Then we also call the restriction of $\psi$ onto $X$ (formally, $\psi|_{\Gamma \times X|_K}$)
the \emph{induced Galois-semiaction} on $X$.

Similarly, if $X$ is defined by a homogeneous ideal in $S^\bullet (V^*)$, then 
this induced Galois-semiaction can be extended in the obvious way to the projectivization $\mathbf P(X)$.
The resulting Galois-semiaction will also be called the Galois-semiaction on $\mathbf P(X)$ \emph{induced} 
by the semirepresentation of $\Gamma$ on $V$.
\end{definition}
\begin{lemma}\label{inducedisstable}
Let $V$ and $X$ be as in Definition \ref{definitioninduced}, in the second paragraph 
(i.e., suppose $X$ is defined by a homogeneous ideal in $S^\bullet (V^*)$).
Then there exists a $\Gamma$-stable finite affine open covering of $\mathbf P(X)$.
\end{lemma}
\begin{proof}
By Theorem \ref{hilbertninety}
applied to the dual semirepresentation of $\Gamma$ on $V^*$, 
there exists a basis $f_1, \ldots, f_n$ of $V^*$ consisting of $\Gamma$-invariant functions.
Then the 
nonzero
loci of all $f_i$s form the desired covering.
\end{proof}

Now we are ready to define the third category we need
to formulate basic facts of Galois descent theory.
Given a finite Galois extension of 
fields $K \subseteq L$ with Galois group $\Gamma$,
we define the category of 
\emph{stable $L$-schemes with semiaction of $\Gamma$} (notation: $\StSch_{L, \Gamma}$).
Its objects are pairs $(X, \psi)$, where $X$ is an $L$-scheme, 
and $\psi\colon \Gamma \times X|_K \to X|_K$  is a Galois-semiaction such that $X$ admits a $\Gamma$-stable finite affine open covering:
each individual affine open set in this covering has to be preserved (normalized) by $\Gamma$.
The morphisms are morphisms in $\Sch_L$ that become $\Gamma$-equivariant after the restriction of scalars to $K$.

Now recall that if a finite group $\Gamma$ (for now, this is not necessarily the Galois group of any extension of fields) 
acts on a scheme $X$ (now this is going to be a scheme over the smaller field, $K$), 
and there is a $\Gamma$-stable finite affine open covering for this action, 
then the categorical quotient always exists, and can be constructed, for example, as the orbit space of the action.
In a bit more details, 
the points of the \emph{orbit space} $X/\Gamma$ are the $\Gamma$-orbits on (the set of points of) $X$, 
a subset $U$ of $X/\Gamma$ is open if it consists of all orbits in an open ($\Gamma$-invariant) subset $\tilde U$ of $X$, and the algebra of 
functions on such an open set $U$ consists of all $\Gamma$-invariant functions on $\tilde U$.
This gives us a ringed space structure on $X/\Gamma$, and the presence of a $\Gamma$-stable finite affine open covering makes it easy to check that 
with this structure, $X/\Gamma$ actually becomes a scheme over $K$.

So, for a \emph{finite} Galois extension $K \subseteq L$ with group $\Gamma$,
we can define the \emph{Galois descent functor} $\GalDec_{K} \colon \StSch_{L, \Gamma} \to \Sch_K$
as follows: an object $(X, \psi)$ is mapped to the categorical quotient $X / \Gamma$, and 
the morphisms are mapped using the universal property of the categorical quotient.

We can also define the \emph{Galois upgrade functor} $\cdot_{L,\Gamma} \colon \Sch_K \to \StSch_{L, \Gamma}$.
On the objects, it maps a $K$-scheme $Y$ to $(Y_L, \varphi)$, where the semiaction $\varphi$ 
is defined on the affine charts as follows: if $U$ is an open affine chart of $Y$, $\sigma \in \Gamma$, 
then $\varphi (\Gamma, (U_L)|_K) = (U_L)|_K$ (recall that the restriction of scalars does not change the topological space).
And if $f \otimes \lambda \in L[U_L] = K[U] \otimes L$, then 
$(\varphi_{\sigma})^* (f \otimes \lambda) = f \otimes \sigma^{-1}(\lambda)$.
On the morphisms, the Galois upgrade functor is just extension of scalars.
An easy direct check shows that the extension of scalars automatically makes morphisms $\Gamma$-equivariant
with our construction of the $\Gamma$-semiactions on the $L$-schemes.

\begin{remark}\label{galupgforvsp}
Let $U$ be a $K$-vector space. Then $U_{L, \Gamma}$ is canonically isomorphic to $(L \otimes_K U, \psi)$, 
where $\psi$ is the Galois-semiaction on $L \otimes_K U$ understood as a scheme induced (Example \ref{actinducedbyrep})
induced
by the standard semirepresentation (Example \ref{stdrepresuv}) of $\Gamma$ on $L \otimes_K U$.

Similarly, if $X \subseteq U$ (resp.\ $X \subseteq \mathbf P(U)$) is a subscheme, 
then $X_L$ can be canonically embedded into $L \otimes_K U$ (resp.\ $\mathbf P(L \otimes_K U)$), 
and the semiaction on $X_{L,\Gamma}$ is also induced by the standard semirepresentation on $L \otimes_K U$.
\end{remark}

Using the Galois descent and upgrade functors, let us state the main theorem of Galois descent theory.
\begin{theorem}\label{thmgaloisdescent}
Let $K \subseteq L$ be a Galois extension with Galois group $\Gamma$.
The Galois descent and upgrade functors are mutually quasiinverse equivalences of categories
$\Sch_K \leftrightarrow \StSch_{L, \Gamma}$.
\end{theorem}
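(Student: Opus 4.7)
The plan is to verify that the compositions $\GalDec_K \circ (\cdot_{L,\Gamma})$ and $(\cdot_{L,\Gamma}) \circ \GalDec_K$ are naturally isomorphic to the respective identity functors. The main tool is classical Galois descent for commutative algebras: if $K \subseteq L$ is a finite Galois extension with group $\Gamma$, then for any $K$-algebra $A$ the canonical map $A \to (A \otimes_K L)^{\Gamma}$ is an isomorphism, and for any $L$-algebra $B$ equipped with a semilinear $\Gamma$-action the canonical map $B^{\Gamma} \otimes_K L \to B$ is an isomorphism.

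First I would check that both functors are well-defined on objects. For $\cdot_{L,\Gamma}$, given $Y \in \Sch_K$, pick any finite affine open cover (which exists since $Y$ is of finite type over $K$); base change to $L$ produces a finite affine open cover of $Y_L$, and each member is preserved by the semiaction $\varphi$ because $\Gamma$ acts only on the $L$-coordinate. For $\GalDec_K$, given $(X, \psi)$ with a finite $\Gamma$-stable affine cover $\{U_i\}$, each restriction $U_i|_K$ is an affine $K$-scheme with a $\Gamma$-action whose categorical quotient is $\Spec(\OO(U_i|_K)^{\Gamma})$; one glues these along the quotients of the $\Gamma$-stable intersections $U_i \cap U_j$. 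A separate verification is needed that the resulting scheme is indeed the categorical quotient of $X|_K$ in the ambient category of all separated finite type $K$-schemes.

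Next I would construct and verify the two natural isomorphisms. On an affine chart $\Spec A \subseteq Y$, the map $Y \to \GalDec_K(Y_L, \varphi)$ is induced by the canonical $K$-algebra isomorphism $A \xrightarrow{\sim} (A \otimes_K L)^{\Gamma}$, which is exactly the classical descent statement. On a $\Gamma$-stable affine chart $\Spec B \subseteq X$, the map between $(\cdot_{L,\Gamma})(\GalDec_K(X,\psi))$ and $X$ is induced by the canonical $L$-algebra isomorphism $B^{\Gamma} \otimes_K L \xrightarrow{\sim} B$; compatibility with the semiaction is automatic from the construction. Gluing the affine-local isomorphisms along the stable covers yields the global natural isomorphisms, and functoriality on morphisms follows from the universal property of the categorical quotient on one side and from the universal property of base change on the other.

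The main obstacle, in my view, is not the affine descent statement itself (which is classical) but rather the gluing and functoriality bookkeeping. One must verify that the categorical quotient of $X|_K$ in the category of all separated finite type $K$-schemes agrees with the affine-local construction using the stable cover, that morphisms in $(\StSch_L, \Gamma)$ descend uniquely to morphisms in $\Sch_K$ via the universal property of the quotient, and that all the constructed isomorphisms are natural in the respective variables. Setting up enough of the formalism of quotients of schemes by finite groups admitting stable affine covers to make these gluing arguments painless is where the bulk of the technical work lies.
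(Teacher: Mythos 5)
Your proposal is correct and follows essentially the same route the paper has in mind: the paper's proof is cited as well-known and explicitly relies on constructing the categorical quotient modulo $\Gamma$ as an orbit space, which is exactly your affine-local invariant-ring construction $\Spec(\OO(U_i|_K)^\Gamma)$ glued along the $\Gamma$-stable cover, combined with the classical isomorphisms $A \xrightarrow{\sim} (A\otimes_K L)^\Gamma$ and $B^\Gamma \otimes_K L \xrightarrow{\sim} B$. The only minor point worth adding to your bookkeeping is that $B^\Gamma$ is again of finite type over $K$ (by finiteness of $B$ over $B^\Gamma$ and the Artin--Tate argument), so the descended scheme stays in the category $\Sch_K$.
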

\begin{proof}
Well-known. 
For a proof
one can see, for example, \cite[\S V.4.20, Proposition 12 and its proof]{serreclass}, 
although the terminology there is a bit different. Instead of actions of $\Gamma$ by semilinear automorphisms, 
the terminology there is based on families of varieties 
(constructed from a single variety by means of all elements of $\Gamma$ in the sense of Definition \ref{serretwist})
and families of morphisms (over $L$, in the standard sense) between these varieties.
The functoriality is not proved there, but it easily follows from 
the explicit construction of $\GalDec_K$ using orbit spaces.
\end{proof}
So, using this theorem, instead of studying schemes over $K$ (they may not have rational points or be otherwise not so nice), 
we can now study varieties over a larger field $L$ 
(which must be a finite Galois extension of $K$, but otherwise we can choose it freely, 
for example so that our schemes over $K$ become nicer when we extend scalars to $L$)
equipped with stable Galois-semiactions.
By Lemma \ref{inducedisstable}, we don't even have to check the stability explicitly
if the semiaction on a projective variety is induced by a semirepresentation.

To work with actions of algebraic groups over $K$ using Theorem \ref{thmgaloisdescent},
we 
need to understand how direct products work in $\Sch_{K,L}$ and in $\StSch_{L, \Gamma}$.
The direct products in $\Sch_L$ and in $\Sch_{K,L}$ are different. 
However, the following lemma shows that direct products from $\Sch_L$ 
are useful in $\Sch_{K,L}$ if we work with semilinear morphisms.
\begin{lemma}\label{propdirectproducts}
Let $K \subseteq L$ be a Galois extension of fields with Galois group $\Gamma$.
Let $X$ and $Y$ be $L$-schemes, let $Z$ be their product in $\Sch_L$,
and let $\pr_1 \in \Mor_{\Sch_L} (Z, X)$ and $\pr_2 \in \Mor_{\Sch_L} (Z, Y)$
be the standard projections.
Then for every $L$-scheme $T$, for every $\sigma \in \Gamma$, and for every two $\sigma$-semilinear morphisms 
$f \colon T \to X$ and $g \colon T \to Y$ there exists a unique $\sigma$-semilinear morphism $h \colon T \to Z$
such that $\pr_1|_K \circ h = f$ and $\pr_2|_K \circ h = g$.
\end{lemma}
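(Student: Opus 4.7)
The plan is to mimic the classical universal-property argument for fiber products over $L$, with the semilinearity hypothesis precisely compensating for the fact that we are working in $\Sch_{K,L}$ rather than $\Sch_L$. First I would reduce to the affine case by a standard gluing argument: choose an affine open covering $\{W_\gamma\}$ of $T$ fine enough so that each $f(W_\gamma)$ lies in some affine open $U_\alpha \subseteq X$ and each $g(W_\gamma)$ lies in some affine open $V_\beta \subseteq Y$. Since $Z$ is covered by the opens $U_\alpha \times_L V_\beta$, it suffices to construct $\sigma$-semilinear morphisms $h_\gamma \colon W_\gamma \to U_\alpha \times_L V_\beta$ satisfying the two compatibility conditions; the uniqueness assertion (which I establish in parallel) then forces the $h_\gamma$ to agree on overlaps and glue to a global $\sigma$-semilinear $h \colon T \to Z$.

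In the affine case, write $X = \Spec A$, $Y = \Spec B$, $T = \Spec C$, so $Z = \Spec(A \otimes_L B)$. Unwinding Definition \ref{defsemilinearmorphism} (using that $\sigma_*$ is dual to $\sigma^{-1} \colon L \to L$), a $\sigma$-semilinear morphism $T \to X$ corresponds to a $K$-algebra homomorphism $\phi_f \colon A \to C$ such that $\phi_f(\lambda a) = \sigma^{-1}(\lambda)\phi_f(a)$ for every $\lambda \in L$ and $a \in A$, and analogously for $\phi_g \colon B \to C$. I would then define a $K$-bilinear map $A \times B \to C$ by $(a,b) \mapsto \phi_f(a)\phi_g(b)$. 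The key computation is that semilinearity of both $\phi_f$ and $\phi_g$ yields
$$
\phi_f(\lambda a)\phi_g(b) = \sigma^{-1}(\lambda)\phi_f(a)\phi_g(b) = \phi_f(a)\phi_g(\lambda b),
$$
so the map is $L$-balanced and therefore factors through a $K$-linear map $\phi_h \colon A \otimes_L B \to C$. A direct check shows $\phi_h$ is a $K$-algebra homomorphism, and it inherits $\sigma^{-1}$-semilinearity on $L$ from either factor; dually, this gives the desired $\sigma$-semilinear morphism $h \colon T \to Z$ with $p_1|_K \circ h = f$ and $p_2|_K \circ h = g$.

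For uniqueness, any candidate $\phi_h$ satisfying the two compatibility conditions must send $a \otimes 1 \mapsto \phi_f(a)$ and $1 \otimes b \mapsto \phi_g(b)$, hence $a \otimes b \mapsto \phi_f(a)\phi_g(b)$, pinning it down. I do not expect a serious obstacle here: the content is really the single observation that the $L$-balancing condition required to descend $A \otimes_K B \to C$ to $A \otimes_L B \to C$ is \emph{exactly} the matching of the $\sigma$-twist across the two factors, so the hypothesis that both $f$ and $g$ are semilinear with respect to \emph{the same} $\sigma$ is used in an essential way. The only mildly delicate point is the bookkeeping in the gluing step — verifying that the locally defined semilinear morphisms patch as morphisms in $\Sch_{K,L}$ — but this follows at once from the uniqueness in the affine case applied to the intersections.
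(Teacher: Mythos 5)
Your argument is correct; note that the paper itself offers no proof of this lemma beyond the words ``well-known,'' so there is nothing to compare against line by line. Your affine computation is exactly the right content: unwinding Definition \ref{defsemilinearmorphism} via Example \ref{sigmastarexample} does give $\phi_f(\lambda a)=\sigma^{-1}(\lambda)\phi_f(a)$, and the observation that the two $\sigma^{-1}$-twists match so that $(a,b)\mapsto \phi_f(a)\phi_g(b)$ is $L$-balanced and descends to a $K$-algebra map $A\otimes_L B\to C$ is precisely where the hypothesis ``the same $\sigma$ on both factors'' enters; uniqueness on pure tensors is as you say. For the gluing step, the bookkeeping you flag is genuinely the only delicate point, and it is worth noting that it is smoothed out by the paper's standing convention that all schemes are separated: on an overlap, both local solutions land in $p_1^{-1}(U_\alpha\cap U_{\alpha'})\cap p_2^{-1}(V_\beta\cap V_{\beta'})=(U_\alpha\cap U_{\alpha'})\times_L(V_\beta\cap V_{\beta'})$, which is again a product of affines, so the affine uniqueness applies verbatim (the same remark gives global uniqueness, since any candidate $h$ carries $W_\gamma$ into $U_\alpha\times_L V_\beta$ because $p_i\circ h$ agrees with $f$, resp.\ $g$, on points). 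An alternative, slightly slicker route worth knowing: a $\sigma$-semilinear morphism $T\to X$ is the same as an $L$-morphism from $T$ to the twist $X^{(\sigma)}=X\times_{\Spec L,\,\sigma_*}\Spec L$, and $(X\times_L Y)^{(\sigma)}\cong X^{(\sigma)}\times_L Y^{(\sigma)}$ canonically, so the lemma reduces formally to the usual universal property of the product in $\Sch_L$ with no local computation; your proof is the explicit, self-contained version of the same fact.
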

\begin{proof}
Consider the $L$-scheme $T^{\sigma^{-1}}$ obtained from $T$ by means of $\sigma^{-1}$ and the $(\sigma^{-1})$-semilinear 
morphism $i:=(\sigma^{-1})_{T^{\sigma^{-1}},T} \colon T^{\sigma^{-1}} \to T$.
The compositions $f \circ i \colon T^{\sigma^{-1}} \to X$ and $g \circ i \colon T^{\sigma^{-1}} \to Y$
are $1_\Gamma$-semilinear, in other words, they are morphisms in $\Sch_L$.
By the universal property of product in $\Sch_L$, they define a morphism $h' \colon T^{\sigma^{-1}} \to Z$.

It follows immediately from the definitions that $i$ is invertible, 
and its inverse $i^{-1} \colon T \to T^{\sigma^{-1}}$ is $\sigma$-semilinear.
So, we can take $h=h \circ i^{-1}$, and this proves the existence of $h$. The uniqueness is proved similarly.
\end{proof}
Suppose, for $K$, $L$, $\Gamma$, $X$, $Y$, $Z$, $\pr_1$, and $\pr_2$ as in the lemma, we have two $\sigma$-semilinear morphisms:
$f \in \Mor_{\Sch_{K,L}} (A, X)$ and $g \in \Mor_{\Sch_{K,L}} (B, Y)$. Let $C$ be the product of $A$ and $B$ in $\Sch_L$, 
and let $\pr_1' \in \Mor_{\Sch_L} (C, A)$ and $\pr_2' \in \Mor_{\Sch_L} (C, B)$
be the standard projections. In this case we will denote by $f \times g \in \Mor_{\Sch_{K,L}} (C,Z)$ the 
unique $\sigma$-semilinear morphism such that $\pr_1|_K \circ (f \times g) = f \circ \pr_1'|_K$ and
$\pr_2|_K \circ (f \times g) = g \circ \pr_2'|_K$.
Informally speaking, this is a straightforward way to build a morphism $A \times B \to X \times Y$ 
out of morphisms $A \to X$ and $B \to Y$.

After we have this lemma, it is easy to construct a Galois-semiaction on a product of two $L$-schemes $X$ and $Y$
out of two semiactions on $X$ and on $Y$. 
Precisely, if $\psi_1 \colon \Gamma \times X|_K \to X|_K$ and $\psi_2 \colon \Gamma \times Y|_K \to Y|_K$ 
are two Galois-semiactions, then the new semiaction on $Z=X\times Y$ (the product in $\Sch_L$), 
which we will call \emph{the product of semiactions} and denote $\psi_1 \times \psi_2$, is defined as follows:
$(\psi_1 \times \psi_2)_{\sigma} = (\psi_{1,\sigma}) \times (\psi_{2,\sigma})$.
Then a direct check shows that $(Z, \psi_1 \times \psi_2)$ is the product of $(X, \psi_1)$ and $(Y, \psi_2)$ in $\StSch_{L, \Gamma}$.

Using this description of products, we can say, for example, the following. 
\begin{example}
Let $K$, $L$, and $\Gamma$ be as above, and let $G$ be an algebraic group over $K$.
Denote by $m$ the multiplication morphism $G \times G \to G$ 
and by $\psi_1$ the Galois-semiaction such that $G_{L,\Gamma} = (G_L, \psi_1)$.
Then $m_L$ is $\Gamma$-equivariant for the semiactions $\psi_1 \times \psi_1$ and~$\psi_1$.

Moreover, let $X$ be a scheme over $K$, let $\varphi \colon G \times X \to X$ be an action, and 
let $\psi_2$ be the Galois-semiactions such that $X_{L,\Gamma} = (X_L, \psi_2)$. 
Then $\varphi_L$ is $\Gamma$-equivariant for the 
semiactions
$\psi_1 \times \psi_2$ and~$\psi_2$.
\end{example}

This finishes the part of theory of Galois descent that we need.

\section{Preparation 2: Isomorphism of Picard groups under scalar extension}\label{sectionpicard}
To prove Theorem \ref{maintheorem}, we have to, roughly speaking, study 
the behavior of quotients of torsors modulo Borel subgroups under scalar expansion.
First, let us
define the quotient of a torsor modulo a Borel subgroup.
The definition we are going to use is not very intuitive, but it is used in papers on canonical dimension (for example, in \cite{karpenkomerkurjevadvances}).
\begin{definition}\label{defemodb}
Let $G$ be a semisimple split simply connected algebraic group over a field $K$, let $B$ be a Borel subgroup, and let $E$ be a $G$-torsor. 
The \emph{quotient of the torsor modulo the Borel subgroup} (notation: $E/B$) is the categorical quotient 
(see \cite[Definition 0.5]{mumford}; ``categorical'' is in the category of all separated schemes of finite type
over $K$)
of $E \times G/B$ modulo the diagonal action of $G$.
\end{definition}
In fact, it can be proved that such a quotient is also a categorical quotient of $E$ modulo $B$, but we will not use this.
The existence of such a categorical quotient $(E \times G/B)/G$ is known,
is stated in \cite[Proposition 12.2]{florence},
and can be proved using Galois descent theory.
We will need an explicit construction for $E/B$, and we will recall it below.
It is known that such a quotient $E/B$ is smooth, 
absolutely integral, 
and projective.

Given this definition, we can now say precisely that the first 
step in proving Theorem \ref{maintheorem}
is to prove the following proposition.
\begin{proposition}\label{propositionpicards}
Let $G$ be a semisimple split simply connected algebraic group over a field $K$, let $B$ be a Borel subgroup, and let $E$ be a $G$-torsor. 
Let $K'$ be an extension of $K$.
Then the map of Picard groups induced by field extension $\Pic (E/B) \to \Pic ((E/B)_{K'})$ is an isomorphism.
\end{proposition}

%
%

And our first step in the proof of Proposition \ref{propositionpicards} itself will be to 
recall a more explicit construction of $E/B$ (and of $G/B$ as well).

First, recall also that a torsor is called \emph{trivial} if it has a rational point.
If $(E, \varphi)$ is a trivial torsor of an algebraic group $G$, and if $e$ is a rational point of $E$, 
then we denote the map $\varphi|_{G \times \{e\}} \colon G \to E$ by $\triv_e$ (``torsorization of the group''). 
One can easily check that
this is an isomorphism
(see Appendix \ref{picardcalculations}, Lemma \ref{triveiso} for details; 
also, see Appendix \ref{picardcalculations} for other calculations omitted in this section).
We keep the notation $\triv_e$ until the end of the text. 
Also, for simplicity of further formulas, we denote $\trivinv_e:= \triv_e^{-1} \colon E \to G$ (``detorsorization''), and
we denote the composition of the inversion map $\inv$ on $G$ and $\trivinv_e$ by $\trivinvinv_e$: $\trivinvinv_e=\inv \circ \trivinv_e$
(``inversion-detorsorization'').
Finally, if it is clear which rational point $e$ of $E$ we are talking about, we simply write $\triv$, $\trivinv$, and $\trivinvinv$
instead of $\triv_e$, $\trivinv_e$, and $\trivinvinv_e$, respectively.

Then, we need a lemma.
\begin{lemma}
Let $(E, \varphi)$ be a torsor of a \emph{smooth} algebraic group $G$ over a field $K$. Then there exists a finite Galois extension $L$ of $K$
such that $(E_L, \varphi_L)$ is a trivial $G_L$-torsor.
\end{lemma}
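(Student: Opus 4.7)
The plan is threefold: first prove that $E$ is smooth over $K$ by passing through an algebraic closure, then use smoothness to locate a closed point of $E$ with finite separable residue field, and finally pass to the Galois closure of that residue field.

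For smoothness, note that a torsor is implicitly non-empty (otherwise no $E_L$ could be trivial), so the base change $E_{\bar K}$ to an algebraic closure $\bar K$ is non-empty. Any closed point of $E_{\bar K}$ has residue field algebraic over $\bar K$, hence equal to $\bar K$, and so gives a $\bar K$-rational point $\tilde e$. The previous lemma yields an isomorphism $\triv_{\tilde e}\colon G_{\bar K} \to E_{\bar K}$, and since $G$ is smooth over $K$, $G_{\bar K}$ (and therefore $E_{\bar K}$) is smooth over $\bar K$. Because smoothness of a morphism locally of finite presentation is fpqc-local on the base and $\Spec \bar K \to \Spec K$ is fpqc, I conclude that $E$ itself is smooth over $K$.

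For the second and third stages, I would cite the standard fact that a non-empty smooth scheme of finite type over $K$ admits a closed point whose residue field is a finite separable extension of $K$ --- the reason being that $E$ is locally étale over affine space $\mathbb A^d_K$, and the fiber of an étale map over a $K$-rational point of $\mathbb A^d_K$ has separable residue fields. Let $\kappa$ denote one such residue field, and let $L \subseteq \bar K$ be a Galois closure of $\kappa/K$. Composing $\Spec L \to \Spec \kappa$ with the immersion $\Spec \kappa \hookrightarrow E$ and base-changing to $L$ produces an $L$-rational point of $E_L$. Applying the previous lemma to this point identifies $(E_L, \varphi_L)$ with a trivial $G_L$-torsor.

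The main obstacle is purely technical rather than conceptual: it lies in justifying (or citing cleanly) the fpqc-descent of smoothness along $\Spec \bar K \to \Spec K$, together with the existence of a closed point with separable residue field on a non-empty smooth $K$-scheme. Both are standard in modern algebraic geometry but deserve a reference. An alternative route that sidesteps these descent issues is to interpret $G$-torsors in terms of Galois cohomology $H^1(K, G)$: for smooth $G$, every class is represented by a continuous cocycle with values in $G(K^{\mathrm{sep}})$, so it factors through $G(L)$ for some finite Galois $L/K$, and this extension trivializes $E_L$.
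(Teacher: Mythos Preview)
Your argument is correct and follows essentially the same route as the paper's. Both first trivialize $E$ over a larger field (the paper uses a finite extension, you use $\bar K$) to deduce smoothness of $E$ by flat descent, then invoke the standard fact that a non-empty smooth $K$-scheme acquires a rational point over a finite separable extension (the paper cites \cite[Prop.~3.2.20]{qingliu} and phrases this via $K^{\mathrm{sep}}$ followed by finite-type truncation), and finally pass to the Galois closure.
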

\begin{proof}[Idea of the proof]
It is easy to see that 
$E$ is smooth. 
Smooth schemes obtain a rational point after scalar extension to a separable closure (\cite[Prop. 3.2.20]{qingliu}).
\end{proof}
So, instead of studying a torsor without rational points, 
we can do a finite Galois extension of scalars and study a torsor 
with a rational point and with a 
compatible Galois-semiaction.

From now on, we fix until the end of this section:
a split semisimple 
simply connected 
algebraic group $G$ over a field $K$, a Borel subgroup $B$ of $G$, a maximal torus $T$ of $G$ contained in $B$, 
a $G$-torsor $(E, \varphi)$, 
a finite Galois extension $L$ of $K$ such that $E_L$ has a rational point, 
and a rational point $e \in E_L$. Denote $\Gamma = \Gal (L/K)$.
It is known that split semisimple groups stay split semisimple after base field extension, and that their Borel subgroups also stay Borel.

Denote the action map $G_L \times (G/B)_L \to (G/B)_L$ by $\xi$, 
and for each individual element (rational point) $g$ of $G_L$, denote by $\xi_g$ 
the action of this element $g$ on $(G/B)_L$ (in other words, $\xi_g = \xi|_{\{g\} \times (G/B)_L}$).
Denote by $\psi_1$, $\overline{\psi}_1$, $\psi_2$ 
the semiactions such that $G_{L,\Gamma} = (G_L, \psi_1)$, 
$(G/B)_{L,\Gamma} = ((G/B)_L, \overline{\psi}_1)$, 
and $E_{L,\Gamma} = (E_L, \psi_2)$.

For a strongly dominant weight $\lambda$ of $G$, denote the corresponding representation of $G$ by $V_\lambda$. 
Recall that
if $v_\lambda \in V_\lambda$ is a highest weight vector, then 
the stabilizer
of $\ell = \Span (v_\lambda)$ is $B$, and that $G/B$ can be constructed as $G \ell \subseteq \mathbf P (V_\lambda)$ 
(this orbit is known to be closed). So, we have a separate construction for $G/B$ for each strongly dominant weight.
Let us also note right away that this construction
obviously commutes with the field extension, 
so $(G/B)_L$ can be constructed as $G_L \Span (1 \otimes v_\lambda) \subseteq \mathbf P (L \otimes_K V_\lambda)$.
So, the action $\overline{\psi}_1$ is induced by the standard semirepresentation of $\Gamma$ on $L \otimes_K V_\lambda$.

Like 
with $G/B$, we will also have a separate construction for $E/B$ for each strongly dominant weight $\lambda$ of $G$.
We are going to construct $E/B$ as the Galois descent of $(G/B)_L$ equipped with a specific Galois-semiaction
(most likely different from $\overline{\psi}_1$). 
First, 
denote by $p$ the following map from $E_L \times (G/B)_L$ to $(G/B)_L$ (here, $\id_{(G/B)_L}$ is the identity map):
\begin{equation*}
p = \xi \circ (\trivinvinv_e \times \id_{(G/B)_L})
\end{equation*}
In other words, we first isomorphically map $E_L$ to $G_L$, then invert $G_L$ (during these maps, $(G/B)_L$ stays untouched), 
and then we act by $G_L$ on $(G/B)_L$.
\begin{lemma}\label{gblisquot}
The variety $(G/B)_L$ together with the map $p$ is a categorical quotient of $E_L \times (G/B)_L$ modulo the diagonal action of $G_L$.
\end{lemma}
\begin{proof}
The $G_L$-invariance of $p$ is a direct computation. The universal property is an easy diagram chase.
For more details, see Appendix \ref{picardcalculations}.
\end{proof}
However, the map $p$ is not equivariant for the semiactions $\psi_2 \times \overline{\psi}_1$ and $\overline{\psi}_1$. 
Let us introduce a new semiaction $\overline{\psi}_2$ on $(G/B)_L$. Namely, for each $\sigma \in \Gamma$, set
\begin{equation}\label{newgaction}
\overline{\psi}_{2,\sigma} = 
\xi_{\trivinvinv_e (\psi_2(\sigma, e))} \circ \overline{\psi}_{1,\sigma}.
\end{equation}
\begin{lemma}\label{actonebisrepres}
The formulas (\ref{newgaction}) indeed define a semiaction on $(G/B)_L$.
Moreover, for each strongly dominant weight $\lambda$ and for the corresponding embedding $(G/B)_L \hookrightarrow \mathbf P (L \otimes_K V_\lambda)$,
$\overline{\psi}_2$
is induced by a semirepresentation of $\Gamma$ on $L \otimes_K V_\lambda$.
In particular, there exists a stable finite affine open covering of $(G/B)_L$ for $\overline{\psi}_2$.
\end{lemma}
\begin{proof}
Denote the standard semirepresentation of $\Gamma$ on $L \otimes_K V_\lambda$ by $\widetilde{\psi}_1$.
Denote the action of an element (a rational point) $g$ of $G_L$ on $L \otimes_K V_\lambda$
by $\Xi_g$.

For each $\sigma \in \Gamma$, denote 
\begin{equation}\label{newgrep}
\widetilde{\psi}_{2, \sigma} = \Xi_{\trivinvinv_e(\psi_2(\sigma, e))} \circ \widetilde{\psi}_{1, \sigma }.
\end{equation}
This is a composition of a $\sigma$-semilinear map from the vector space $L \otimes_K V_\lambda$
to itself and a linear one, so it is also a $\sigma$-semilinear map of vector spaces.
One more direct computation, this time using the $\Gamma$-equivariance of the 
representation map $G_L \times (L \otimes_K V_\lambda) \to L \otimes_K V_\lambda$
and of the action map $\varphi_L$ shows that 
all these maps together 
form a semirepresentation of $\Gamma$ on $L \otimes_K V_\lambda$
(see Appendix \ref{picardcalculations}, Lemma \ref{psitwoissemirep} for more details).
Then it is clear from the formulas (\ref{newgaction}) and (\ref{newgrep})
that this semirepresentation induces the semiaction $\overline{\psi}_2$.
The existence of a stable finite affine open covering follows from Lemma \ref{inducedisstable}.
\end{proof}
\begin{sideremark}
In terms of Galois cohomology (which we 
will not use here,
but details can be found in \cite[Section 5.1]{serre}), 
the formula for $\overline{\psi}_2$ can be formulated as 
``$\overline{\psi}_2$ is obtained from $\overline{\psi}_1$
by twisting by the cocycle 
$(\sigma \mapsto \xi_{\trivinvinv_e (\psi_2(\sigma, e))}) \in H^1 (\Gamma, \Aut ((G/B)_L))$''.
\end{sideremark}
\begin{lemma}\label{pequivar}
The map $p$ is equivariant for the semiactions $\psi_2 \times \overline{\psi}_1$ and $\overline{\psi}_2$. 

Moreover, let $q \colon (E_L \times (G/B)_L, \psi_2 \times \overline{\psi}_1) \to (X, \psi_3)$
be another $G_L$-invariant and $\Gamma$-equivariant morphism 
(where $X$ is an arbitrary $L$-scheme with a Galois-semiaction $\psi_3$).
Then the unique map $r \colon (G/B)_L \to X$ from the universal property of the categorical quotient
is actually $\Gamma$-equivariant for the semiactions $\overline{\psi}_2$ and $\psi_3$.
\end{lemma}
\begin{proof}
Direct computation. The first statement again uses the $\Gamma$-equivariance of the maps $\varphi_L$ 
(for the semiactions $\psi_1 \times \psi_2$ and $\psi_2$) 
and $\xi$
(for the semiactions $\psi_1 \times \overline{\psi}_1$ and $\overline{\psi}_1$).
The second statement uses the uniqueness of the map in the universal property of a categorical quotient.
See Appendix \ref{picardcalculations} for more details.
\end{proof}
\begin{proposition}\label{ebconstruction}
In terms of the notation introduced above in this section,
the scheme $\GalDec_K((G/B)_L, \overline{\psi}_2)$ with the map $\GalDec_K(p)$
is a categorical quotient of $E \times (G/B)$ modulo the diagonal action of $G$.
Therefore, it can be used as $E/B$, and $(E/B)_L$ then becomes isomorphic to $(G/B)_L$.
\end{proposition}
\begin{proof}
Follows from Lemmas \ref{actonebisrepres} (for stability), \ref{gblisquot}, and \ref{pequivar}. Also uses Theorem \ref{thmgaloisdescent}.
\end{proof}
\begin{remark}\label{etrivebisogb}
It follows from this construction that if $E$ itself is trivial, then we can take $L=K$ and see that $E/B$ is isomorphic to $G/B$.
\end{remark}

Now, after we have recalled an explicit construction of $E/B$, 
there are two (most easy) ways to prove Proposition \ref{propositionpicards}: one uses representation theory and Galois descent further, 
and the other uses \'etale sheaves.
In this section, we continue with the proof using representation theory, 
and the proof using \'etale sheaves will be outlined in Appendix \ref{appendixetale} 
(although that proof will still use a previously known result about Picard groups 
we quote below in this section, Lemma \ref{picardexact}).

For the proof using representation theory,
let us first prove the surjectivity in 
Proposition \ref{propositionpicards} for $K' = L$ (the field we have fixed).
%
We will need
the following explicit description of the Picard group of a flag variety.
\begin{theorem}\label{theorempicflags}
For $K$, $L$, $G$, and $B$
as above, denote the weight lattice of $G_L$ by $\Lambda$.
For each strongly dominant weight $\lambda \in \Lambda$, denote by $\mathcal L_\lambda$ 
the pullback of the anticanonical bundle under the embedding 
$G_L/B_L \hookrightarrow \mathbf P (L \otimes_K V_\lambda)$
described above. Then:
\begin{enumerate}
\item The notation $\mathcal L_\lambda$
and the map $\lambda \mapsto \mathcal L_\lambda$ (which we have so far defined for strongly dominant weights $\lambda$ only)
can be extended to a group homomorphism $\Lambda \to \Pic (G_L/B_L)$. Moreover, this group homomorphism
is actually an isomorphism.
\item In terms of this notation, if $\lambda_i$ is the $i$th fundamental weight, then the vanishing locus of 
(any) global section of $\mathcal L_{\lambda_i}$ is linearly equivalent to $(D_i)_L$, where $D_i$ is the divisor described in the Introduction.
\end{enumerate}
\end{theorem}
\begin{proof}
Well-known.
\end{proof}
Also, we will need the following lemma about hyperplane sections in projective varieties in general.
\begin{lemma}\label{represmeansdivdescent}
Let $L/K$ be a finite Galois extension of fields (unlike elsewhere in this section, 
generally speaking, they don't have to be related to any algebraic groups or torsors), $\Gamma=\Gal(L/K)$.
Let $V$ be an $L$-vector space with a semirepresentation of $\Gamma$, 
and let $X \subseteq \mathbf P (V)$ be an integral subscheme 
with the induced Galois-semiaction $\psi$. 
Let $D \in \CH^1 (X)$ be the class of (any) hyperplane section of $X$.

Then, after the identification $(X, \psi) \cong (\GalDec_K (X,\psi))_{L,\Gamma}$, 
$D$ belongs to the image of the scalar extension map $\CH^1(\GalDec_K (X,\psi)) \to \CH^1(X)$.
\end{lemma}
\begin{proof}
Consider the dual semirepresentation of $\Gamma$ on $V^*$. 
By Theorem \ref{hilbertninety}, there exists a nonzero linear function $f \in (V^*)^\Gamma$. 
Then the vanishing locus of $f$ in $X$ is a $\Gamma$-invariant hyperplane section. Denote this hyperplane section by $Y$.

It follows from the explicit construction of $\GalDec_K$ using orbit spaces that the Galois descent of 
the embedding of $Y$ into $X$ is still an embedding of a closed subscheme.
By Theorem \ref{thmgaloisdescent},
this subscheme 
becomes $Y$ after the extension of scalars back to $L$.
\end{proof}
\begin{proposition}\label{propositionpicardssurjective}
Let $E$, $B$, and $L$ be as above.
Then the map of Picard groups induced by field extension $\Pic (E/B) \to \Pic ((E/B)_L)$ is surjective.
\end{proposition}
\begin{proof}
Follows from Proposition \ref{ebconstruction}, Theorem \ref{theorempicflags}, Lemma \ref{actonebisrepres}, and Lemma \ref{represmeansdivdescent}.

More accurately, we also need the fact that
for any smooth and absolutely connected scheme $X$, 
the isomorphism $\Pic (X) \to \CH^1 (X)$ commutes with extension of scalars
(this is well-known), 
and the fact that the construction of $G/B$ also commutes with extension of scalars (this follows directly from the construction, as we have already mentioned).
\end{proof}

Now, after we have proved the surjectivity in this case, we will need 
to recall a well-known result about Picard and Brauer groups.
First, note that for any Galois-semiaction on an integral scheme $Y$
there is a straightforward way to extend 
this semiaction to an action on the set of open subsets of $Y$, on 
the field of rational functions on $Y$,
and therefore on the Picard group of $Y$.

With these remarks,
let us state the result about Picard and Brauer groups.
For any two fields $K_1 \subseteq L_1$, denote $\Br_{L_1}(K_1) = \ker (\cdot \otimes_{K_1} L_1 \colon \Br(K_1) \to \Br(L_1))$.
\begin{lemma}\label{picardexact}
Let $X$ be a 
smooth
integral scheme over a field $K_1$ 
without nonconstant invertible functions.
Let $L_1$ be a finite Galois extension of $K_1$ such that 
$X_{L_1}$ is still integral and the only invertible functions on $X_{L_1}$ are still constants.
Let $\Gamma' = \Gal (L_1/K_1)$.
Then: 
\begin{enumerate}
\item\label{picardinsidegamma}
The image of the map $\cdot_{L_1} \colon \Pic (X) \to \Pic(X_{L_1})$ is contained in $\Pic (X_{L_1})^{\Gamma'}$.
\item\label{picardtrueexact}
There is an exact sequence
\begin{equation*}
0 \to \Pic(X) \xrightarrow{\cdot_{L_1}} \Pic(X_{L_1})^{\Gamma'} \to \Br_{L_1} (K_1) \xrightarrow{\cdot \otimes K_1(X)} \Br (K_1(X)).
\end{equation*}
\end{enumerate}
\end{lemma}
\begin{proof}
Well-known (see, for example, \cite[Proof of theorem 3.1]{coilliot}).
Follows from exact sequences 
\begin{equation*}
1 \to {L_1}^* \to L_1(X_{L_1})^* \to L_1(X_{L_1})^*/{L_1}^* \to 1
\end{equation*}
and 
\begin{equation*}
1 \to {L_1}(X_{L_1})^* / {L_1}^* \to \Div (X_{L_1}) \to \Pic (X_{L_1}) \to 1.
\end{equation*}
\end{proof}

Now we are ready to prove Proposition \ref{propositionpicards} in the whole generality.
\begin{lemma}\label{proptruefortrivial}
Proposition \ref{propositionpicards} is true if $E$ is a trivial torsor.
\end{lemma}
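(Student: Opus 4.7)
The plan is to reduce the claim to the classical computation of the Picard group of the full flag variety $G/B$. Since the torsor $E$ is trivial, it admits a rational point $e' \in E(K)$, and the same argument that was used in Lemma \ref{actiononeb} applied over $K$ itself rather than over $L$ produces a $G$\nobreakdash-equivariant isomorphism $\trivQuot_{e'} \colon G/B \xrightarrow{\sim} E/B$. Concretely, one can also just identify $(G \times G/B)/G$ with $G/B$ via the $G$\nobreakdash-invariant morphism $(g,xB) \mapsto g^{-1}xB$, whose obvious section $xB \mapsto (1,xB)$ exhibits it as a categorical quotient. This identification is preserved by arbitrary scalar expansion, so $(E/B)_{K_1}$ is canonically identified with $(G/B)_{K_1}$, and the two pullback maps on Picard groups fit into a commutative square. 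Hence it suffices to show that $\Pic(G/B) \to \Pic((G/B)_{K_1})$ is an isomorphism.

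For this reduction step I would invoke the Bruhat decomposition. Because $G$ is split and $B$ is defined over $K$, the cells $Bw_0w^{-1}B/B$ and their closures $Z_w$ are already defined over $K$, and the open cells stratify $G/B$ into affine spaces. As already recalled in the excerpt, this cellular structure forces $\CH(G/B)$ to be a free abelian group on the Schubert classes $[Z_w]$; in particular $\Pic(G/B) = \CH^1(G/B)$ is freely generated by the codimension-one Schubert classes $[D_1], \ldots, [D_r]$.

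The decisive (and essentially automatic) observation is that this description is stable under arbitrary extension of scalars: the Bruhat decomposition of $(G/B)_{K_1} = G_{K_1}/B_{K_1}$ consists of the same cells, obtained by base change from those over $K$, so the same argument identifies $\Pic((G/B)_{K_1})$ with the free abelian group on $[(D_1)_{K_1}], \ldots, [(D_r)_{K_1}]$. Since the pullback $\Pic(G/B) \to \Pic((G/B)_{K_1})$ sends $[D_i] \mapsto [(D_i)_{K_1}]$, it carries a $\mathbb{Z}$\nobreakdash-basis to a $\mathbb{Z}$\nobreakdash-basis and is therefore an isomorphism. I do not anticipate any serious obstacle: the only nontrivial ingredient is the freeness of $\CH(G/B)$ on Schubert classes, which is classical and already cited in the excerpt, and the triviality of $E$ takes care of replacing $E/B$ by $G/B$.
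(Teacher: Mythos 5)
Your proposal is correct and follows essentially the same route as the paper: identify $E/B$ with $G/B$ via the trivialization coming from a rational point of $E$ (the argument of Lemma \ref{actiononeb} over $K$), and then use that $\Pic(G/B)$ and $\Pic((G/B)_{K_1})$ are both free on the Schubert divisor classes, with base change sending basis to basis. The only cosmetic caveat is that the existence of a section alone does not establish the categorical-quotient property of $(G\times G/B)/G \cong G/B$, but the standard untwisting identification (or the Lemma \ref{actiononeb} argument you also invoke) covers this.
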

\begin{proof}
Follows from 
Remark \ref{etrivebisogb}
and the explicit description of $\Pic (G/B)$ (like Theorem \ref{theorempicflags}, but over an arbitrary field instead of $L$).
\end{proof}
\begin{lemma}\label{proptrueforfixedgalois}
Proposition \ref{propositionpicards} is true when $K'$ equals $L$ (the field we fixed earlier in this section).
\end{lemma}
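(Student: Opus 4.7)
The plan is to apply Lemma \ref{picardexact} to the smooth projective absolutely irreducible $K$-scheme $X = E/B$ with the extension $L/K$. The lemma yields the exact sequence
$$
0 \to \Pic(E/B) \to \Pic((E/B)_L)^\Gamma \to \Br_L(K) \xrightarrow{\cdot \otimes_K K(E/B)} \Br(K(E/B)).
$$
Proposition \ref{injectivityonbrauer} tells us that $\Br(K) \to \Br(K(E/B))$ is injective, so its restriction to the subgroup $\Br_L(K) \subseteq \Br(K)$ is injective as well. Exactness then forces the map $\Pic((E/B)_L)^\Gamma \to \Br_L(K)$ to vanish, and we obtain the isomorphism $\Pic(E/B) \xrightarrow{\sim} \Pic((E/B)_L)^\Gamma$. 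The lemma thereby reduces to showing that $\Gamma$ acts trivially on the entire group $\Pic((E/B)_L)$.

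To establish this, I would transport the semiaction $\overline{\psi}_2$ to the split flag variety $G_L/B_L$ along the $G_L$-equivariant isomorphism $\trivQuot_e \colon G_L/B_L \to (E/B)_L$ of Lemma \ref{actiononeb}. Let $\psi_{G/B}$ denote the canonical semiaction on $(G/B)_L$ coming from the $K$-structure of $G/B$; then $p_L \colon E_L \times (G/B)_L \to (E/B)_L$ is $\Gamma$-equivariant for $\psi_2 \times \psi_{G/B}$ on the source and $\overline{\psi}_2$ on the target. Combining this equivariance with the $G_L$-invariance of $p_L$ and writing $\psi_2(\sigma, e) = \triv_e(h_\sigma)$ for the unique element $h_\sigma \in G_L$, a direct unwinding of the definitions shows that the semiaction transported to $G_L/B_L$ equals, for each $\sigma \in \Gamma$, the composition of $\psi_{G/B}(\sigma, -)$ with a left translation by a specific element of $G_L$.

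It remains to observe that both pieces of this composition act trivially on $\Pic(G_L/B_L)$. The canonical semiaction acts trivially because $G$ is split and $B$ is defined over $K$: the natural map $\Pic(G/B) \to \Pic(G_L/B_L)$ identifies both sides with the free abelian group on the classes of Schubert divisors, and its entire image is $\Gamma$-fixed by part \ref{picardinsidegamma} of Lemma \ref{picardexact}. Left translations act trivially because $G_L$ is connected, so the map $G_L \to \Aut(\Pic(G_L/B_L))$ is constant at the identity. Together these two facts yield that $\Gamma$ acts trivially on $\Pic((E/B)_L)$, completing the proof. The only nontrivial step is the identification of the transported semiaction as a twist of $\psi_{G/B}$ by a left translation; this is a mechanical computation with the definitions of $\trivQuot_e$, $p_L$, and the stated $\Gamma$-equivariances, and is the place where all the geometric input actually enters.
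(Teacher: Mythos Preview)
Your argument is correct and follows the same overall arc as the paper: apply the exact sequence of Lemma~\ref{picardexact} together with Proposition~\ref{injectivityonbrauer} to reduce to showing that $\Gamma$ acts trivially on $\Pic((E/B)_L)$. Where you diverge is in how you establish that triviality.

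The paper does it by a function-field trick: it base-changes to $K(E/B)$, where the torsor $E$ becomes trivial by Lemma~\ref{detfuncscoincide}, and then applies Lemma~\ref{proptruefortrivial} twice (once over $K(E/B)$, once over $L$) together with the compatibility Lemma~\ref{galactionpicrestriction} to transport the triviality of the $\Gamma$-action from $\Pic((E/B)_{L((E/B)_L)})$ back down to $\Pic((E/B)_L)$. You instead transport the semiaction $\overline{\psi}_2$ along $\trivQuot_e$ to $G_L/B_L$ and identify it explicitly as $\psi_{G/B}(\sigma,-)$ post-composed with a left translation by $h_\sigma^{-1}$ (the very element appearing in formula~\eqref{newgaction}); you then kill each factor separately, the first via Lemma~\ref{proptruefortrivial} and part~\ref{picardinsidegamma} of Lemma~\ref{picardexact}, the second by connectedness of $G_L$. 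Your route is more hands-on and avoids the pass to the generic point and Lemma~\ref{galactionpicrestriction}; the paper's route is more uniform and never unwinds the cocycle, at the cost of an extra layer of field extensions. One small remark: the connectedness argument deserves a word of justification---the cleanest is that the map $g \mapsto g^*\mathcal{L}$ defines a morphism from the irreducible variety $G_L$ to the discrete Picard scheme of $G_L/B_L$, hence is constant---but this is standard and your proof stands.
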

\begin{proof}
The injectivity follows\footnote{The idea of using the exact sequence of Brauer and Picard groups to prove the isomorphism between Picard groups 
is present in \cite[Proof of Theorem 1.4]{karpenkoduke}.}
from Lemma \ref{picardexact} (\ref{picardtrueexact}) for $K_1 = K$, $L_1=L$.
The surjectivity is Proposition \ref{propositionpicardssurjective}.
\end{proof}
\begin{proof}[Idea of proof of Proposition \ref{propositionpicards} in the general case]
%
We omit the details regarding commutativity of the diagrams of Picard groups for consecutive field extensions.
First, 
note that
the proposition for $K'$ containing $L$ follows from Lemma \ref{proptruefortrivial} for the torsor $E_L$ and for the extension $K'/L$
(and from Lemma \ref{proptrueforfixedgalois}).

Then, for a completely arbitrary $K'$ containing $K$, we first find a finite Galois extension $L'$ of $K'$ for the $G_{K'}$-torsor $E_{K'}$
in the same way as we found and fixed $L$ for $K$, $G$, and $E$, so that $(E_{K'})_{L'}$ has a rational point. 
Since $L$ is a finite Galois extension of $K$, 
we can construct a field $L''$ admitting embeddings of $L$ and of $L'$.
By the previous step for $E_{K'}$ instead of $E$, $\Pic ((E/B)_{K'}) \cong \Pic ((E/B)_{L''})$.
By the previous step for the original $E$, 
$\Pic (E/B) \cong \Pic ((E/B)_{L''})$. 
Therefore, $\Pic (E/B) \to \Pic ((E/B)_{K'})$ is an isomorphism.
\end{proof}

\section{Estimate of canonical dimension}\label{sectionlaststeps}

The next steps of the proof of Theorem \ref{maintheorem} follow the idea of proof of \cite[Proposition 5.1]{karpenkoduke}.

First, we will need a result from \cite{karpenkomerkurjevadvances}.
To formulate it, let us start with recalling 
a definition from \cite{karpenkomerkurjevadvances}. Let $X$ be a scheme over an arbitrary field $F$. 
The \emph{determination function associated with $X$} (see \cite[Section 2]{karpenkomerkurjevadvances}) 
is the following functor from the category of all fields containing $F$ to the category 
consisting of $\varnothing$ and a fixed one-element set $\{0\}$: A field $F_1$ is mapped to $\{0\}$ if and only if $X_{F_1}$ has a rational point, 
otherwise $F_1 \mapsto \varnothing$.

Also, 
recall that an 
algebraic group $H$ over $K$
is called \emph{special} if all torsors of all groups $H_{K'}$, where $K'$ is a field extension of $K$, are trivial.
It is known (see, for example, \cite[Section 3 and Theorem 2.1]{karpenkogensplit}) that 
the Borel subgroup of a split reductive group
is special.

Now, with these two definitions, 
let us quote the result from \cite{karpenkomerkurjevadvances} and immediately get two corollaries.
\begin{lemma}[{\cite[Lemma 6.5]{karpenkomerkurjevadvances}}]
Let $P$ is a subgroup of an algebraic group $G$. Suppose that $P$ us a special group. Let $E$ be a $G$-torsor.
Then the determination functions of the varieties $E$ and $E/P$ coincide.\qed
\end{lemma}
\begin{corollary}\label{detfuncscoincide}
Let $G$ be a semisimple split simply connected algebraic group over a field $K$, let $E$ be a $G$-torsor, and let $B$ be a Borel subgroup of $G$.
Then for any field extension $K'/K$, 
$E_{K'}$ has a rational point if and only if $(E/B)_{K'}$ has a rational point.\qed
\end{corollary}

\begin{corollary}\label{cdeebequal}
Under the notation of 
Corollary \ref{detfuncscoincide},
$\cd (E) = \cd(E/B)$.\qed
\end{corollary}

Then, we will need a well-known fact about the Chow ring of a smooth scheme.
\begin{proposition}\label{chowmorphism}
Let $X$ be a smooth scheme over a field $K$, and let $L$ be an extension of $K$. 
The map of Chow rings $\CH_L \colon \CH (X) \to \CH (X_L)$, $[Y] \mapsto [Y_L]$ for each 
integral\footnote{We will not need this, but the map defined this way 
actually maps the class of any subscheme $Y$ of $X$ to $[Y_L] \in \CH (X_L)$.}
subscheme $Y$ of $X$ is well-defined and is a morphism of rings.

The isomorphism $\Pic (X) \to \CH^1 (X)$ commutes with extension of scalars.
\end{proposition}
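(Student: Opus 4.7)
The plan is to identify $\CH_L$ with the flat pullback along the faithfully flat morphism $\pi \colon X_L \to X$ and then invoke the standard intersection-theoretic machinery (e.g., as developed in Fulton's book, Chapters 1, 6, and 8). For an irreducible reduced subscheme $Y \subseteq X$, the scheme-theoretic preimage $\pi^{-1}(Y) = Y_L$ has pure dimension equal to $\dim Y$, but may fail to be reduced when $L/K$ is inseparable; the Fulton flat pullback $\pi^{*}[Y]$ is by definition $\sum_i \len (\OO_{Y_L, \eta_i}) [\overline{\{\eta_i\}}]$, summed over generic points $\eta_i$ of $Y_L$, which is precisely the fundamental class $[Y_L]$. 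So the map described in the statement is the flat pullback, and the footnote follows from the analogous formula for not-necessarily-reduced $Y$.

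For well-definedness, I would note that flat pullback respects rational equivalence: a rational equivalence between two cycles on $X$ is witnessed by a subvariety $W \subseteq X \times_K \PP^1_K$ whose proper fibers over $0$ and $\infty$ realize the given cycles, and base change to $X_L \times_L \PP^1_L = (X \times_K \PP^1_K)_L$ gives the analogous rational equivalence on $X_L$. Faithful flatness of $\pi$ (or, if one prefers, flatness of $X \times_K \PP^1_K \to X$) is the only property used.

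For the ring structure, I would use that on a smooth $X$ the intersection product is $\alpha \cdot \beta = \Delta^{*}(\alpha \times \beta)$, where $\Delta \colon X \hookrightarrow X \times_K X$ is the diagonal (a regular closed immersion, since $X$ is smooth) and $\Delta^{*}$ is the Gysin pullback. The exterior product trivially commutes with flat base change. The Gysin pullback along $\Delta$ commutes with flat base change because the deformation to the normal cone is functorial in base change: the normal bundle of $\Delta_L$ in $(X \times_K X)_L = X_L \times_L X_L$ is canonically the base change of the normal bundle of $\Delta$ in $X \times_K X$, and the specialization map is compatible with scalar extension. Hence $\CH_L$ sends products to products.

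For the \emph{moreover} statement, the isomorphism $\Pic (X) \to \CH^1 (X)$ sends a line bundle $\mathcal{L}$ to the class of the divisor of any nonzero rational section (equivalently, a Cartier divisor to its associated Weil divisor; these agree on the locally factorial scheme $X$). Pullback of Cartier divisors along $\pi$ and flat pullback of Weil divisors are both natural, and the isomorphism $\Pic \to \CH^1$ intertwines them, so the square commutes. The main technical obstacle, if any, is verifying compatibility of the Gysin pullback along $\Delta$ with flat base change; this is ultimately a bookkeeping exercise with the deformation to the normal cone and is documented in Fulton, so the proof requires no new ideas beyond citing these standard facts.
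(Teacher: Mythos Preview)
Your proposal is correct and is precisely the standard argument one would give here: the paper's own proof is simply the phrase ``Well-known,'' so you have supplied the details the paper omits, and your route through flat pullback and Fulton's compatibility results is the canonical one. One small remark: faithfulness of $\pi$ plays no role---flatness alone suffices for everything you use---and for the ring-homomorphism claim you could cite Fulton's Proposition~8.3 directly rather than unwinding the deformation to the normal cone yourself.
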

\begin{proof}
Well-known.
\end{proof}
We will also need the following theorem. It is stated in \cite[Theorem 2.3]{karpenkoduke} and 
follows from \cite[Corollary 12.2]{fulton}, the preceding commutative diagram, 
and the definition of \emph{distinguished varieties} in \cite{fulton}. 
More precisely, this definition implies that in the particular case of the commutative diagram, 
the distinguished varieties are subvarieties of the intersection of supports of the cycles.
Recall that a cycle (a formal linear combination of irreducible subvarieties) is called \emph{nonnegative} 
if the coefficients in this linear combination are nonnegative, 
and an element of the Chow ring is called \emph{nonnegative} if it can be represented by a nonnegative cycle.
\begin{theorem}\label{nonnegativeintersection}
Let $X$ be a smooth scheme over an arbitrary field $K$ such that the tangent bundle is generated by global sections.
Let $\alpha$ and $\beta$ be 
nonnegative 
elements of $\CH (X)$.
If $\alpha$ (resp.\ $\beta$) is represented by a 
nonnegative cycle with support on $A \subseteq X$ (resp.\ $B \subseteq X$), 
then $\alpha \beta$ 
can be represented by a 
nonnegative cycle with support on $A \cap B$.\qed
\end{theorem}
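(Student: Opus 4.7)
The plan is to realize the Chow-ring product $\alpha\beta$ as a refined intersection via the diagonal embedding $\Delta \colon X \hookrightarrow X \times X$, and then invoke Fulton's positivity machinery from \cite{fulton}, Chapter 12. Since $X$ is smooth, $\Delta$ is a regular embedding with normal bundle canonically isomorphic to the tangent bundle $T_X$, and by construction of the intersection pairing on a smooth scheme we have $\alpha\beta = \Delta^!(\alpha \times \beta)$, where $\Delta^!$ is Fulton's refined Gysin pullback.

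Next I would exploit the compatibility of $\Delta^!$ with supports. Pick nonnegative cycle representatives $\sum m_i [V_i]$ and $\sum n_j [W_j]$ of $\alpha$ and $\beta$ with $V_i \subseteq A$ and $W_j \subseteq B$. Then $\alpha \times \beta$ is supported on $A \times B$, and $\Delta^{-1}(A \times B) = A \cap B$, so that the refined pullback lives in $\CH_*(A \cap B)$ before being pushed forward to $\CH_*(X)$. Unwinding $\Delta^!$ via deformation to the normal cone, the relevant cycle is computed as $s^*[C]$, where $C$ is the normal cone of $V_i \cap W_j$ in $V_i \times W_j$ sitting inside $T_X|_{V_i \cap W_j}$ and $s$ is the zero section. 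By definition, the distinguished subvarieties are the projections to $A \cap B$ of the irreducible components of these cones, and they are subvarieties of $A \cap B$.

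The main obstacle is the positivity step: showing that $s^*[C]$ is represented by a \emph{nonnegative} combination of classes of distinguished subvarieties. This is exactly where the hypothesis that $T_X$ is generated by global sections enters. The restriction $T_X|_{V_i \cap W_j}$ inherits global generation, and Fulton's Theorem 12.1 (of which Corollary 12.2 is an immediate consequence) establishes that for a globally generated vector bundle, pulling a subcone back along the zero section yields a nonnegative class supported on the image of the cone. The proof of that theorem, in turn, uses a Kleiman-style transversality argument with sufficiently generic global sections of $T_X$ to move the cone into transverse position with the zero section.

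Assembling these three ingredients: (i) $\alpha\beta = \Delta^!(\alpha \times \beta)$, (ii) the refined pullback lives on $A \cap B$ and is computed via a cone in $T_X|_{A \cap B}$, and (iii) global generation of $T_X$ forces $s^*[C]$ to be nonnegative, yields the desired nonnegative representative of $\alpha\beta$ supported on $A \cap B$. Thus the proof is essentially a translation of Fulton's Corollary 12.2 to the special case of intersection theory on a smooth scheme via its diagonal, and no genuinely new argument is required beyond verifying these references apply.
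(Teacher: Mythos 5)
Your proposal is correct and follows essentially the same route as the paper, which simply cites \cite[Theorem 2.3]{karpenkoduke} and \cite[Corollary 12.2]{fulton} together with the observation that the distinguished varieties lie in $A \cap B$; you have merely unpacked that citation (diagonal embedding, refined Gysin map $\Delta^!$ with normal bundle $T_X$, and positivity for globally generated bundles). No substantive difference or gap.
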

We need two more facts from \cite{karpenkoduke}:
\begin{lemma}[{\cite[Remark 2.4]{karpenkoduke}}]\label{ebisgbgs}
Let $G$ be a split simple simply connected algebraic group over an arbitrary field $K$, let $B$ be a Borel subgroup of $G$, 
let $E$ be a $G$-torsor. Then the tangent bundle of $E/B$ is generated by global sections.\qed
\end{lemma}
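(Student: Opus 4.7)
The plan is to exploit the homogeneity of $E/B$ established in Lemma \ref{ebhomogen}: $E/B$ is $H$-equivariantly isomorphic to the variety of Borel subgroups of the semisimple group $H = \GalDec_K(G_L, \psi_3)$. Differentiating this action at the identity yields a morphism of coherent sheaves
$$
\alpha \colon \Lie(H) \otimes_K \OO_{E/B} \longrightarrow T_{E/B}.
$$
Since $\Lie(H)$ is a finite-dimensional $K$-vector space, the source is a free (hence globally generated) sheaf. So it suffices to show that $\alpha$ is surjective; then $T_{E/B}$ will be a quotient of a globally generated sheaf, hence globally generated.

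To verify surjectivity of $\alpha$, I would pass to a separable closure $K_s$ of $K$. By the same reasoning as in Lemma \ref{borelusableprop}, over $K_s$ the group $H_{K_s}$ becomes split and $(E/B)_{K_s}$ becomes $H_{K_s}$-equivariantly isomorphic to $H_{K_s}/B_0$ for some Borel subgroup $B_0$. On this classical flag variety the orbit map $H_{K_s} \to H_{K_s}/B_0$, $h \mapsto h \cdot x$, is smooth and surjective at every point $x$; differentiating at the identity shows that the base change $\alpha_{K_s}$ is fibre-wise surjective, hence surjective as a map of coherent sheaves. Surjectivity of a map of coherent sheaves can then be descended from $K_s$ to $K$ by faithful flatness of $K_s/K$.

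The main obstacle, such as it is, is just bookkeeping: making sure the infinitesimal action of $H$ (which is defined via the action $\GalDec_K(\overline{\varphi})$ from Lemma \ref{actiononeb} through the canonical identification $E/B \cong \GalDec_K((E/B)_{L,\Gamma})$) really does produce a morphism $\Lie(H) \otimes_K \OO_{E/B} \to T_{E/B}$ whose base change to $K_s$ coincides with the standard infinitesimal action on the flag variety $H_{K_s}/B_0$. This is a compatibility of the Galois descent construction with differentiation, and follows directly from functoriality of $\Lie(\cdot)$ and of the tangent sheaf under the equivalence of Theorem \ref{thmgaloisdescent}.
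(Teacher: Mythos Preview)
Your argument is correct and is the standard one: a variety homogeneous under a smooth algebraic group has globally generated tangent bundle, because the infinitesimal action furnishes a surjection from a trivial bundle onto $T_X$. The paper itself gives no proof here; the lemma is stated with an immediate \qed and a citation to \cite[Remark 2.4]{karpenkoduke}. Karpenko's remark contains essentially the same observation (homogeneity under a twisted form of $G$ forces global generation of the tangent bundle), so your proposal is in line with the cited source.

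One small comment on presentation: you invoke Lemma~\ref{ebhomogen} and the group $H = \GalDec_K(G_L,\psi_3)$, which in the paper are set up in Section~2 under a fixed choice of $L$, $e$, and the semiactions $\psi_i$. The statement of Lemma~\ref{ebisgbgs} in Section~3 introduces fresh $G$, $B$, $E$, $K$, so strictly speaking you should say that, given these, one \emph{chooses} a trivializing finite Galois extension $L$ and a rational point $e \in E_L$ and then applies the Section~2 machinery. You essentially acknowledge this under ``bookkeeping'', and indeed it is routine; but since the paper's Section~2 is written with everything fixed, it is worth one sentence making the quantifiers explicit. Alternatively, and perhaps more cleanly, you could bypass Section~2 entirely: over $L$ one has $(E/B)_L \cong (G/B)_L$ (Lemma~\ref{actiononeb}), the tangent bundle of $(G/B)_L$ is globally generated by the $G_L$-action, and global generation of a coherent sheaf descends along the faithfully flat extension $K \hookrightarrow L$. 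This avoids any reference to $H$ or to Lemma~\ref{ebhomogen}.
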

\begin{lemma}[{\cite[Corollary 2.2]{karpenkoduke}}]\label{scalarextpreservesnonneg}
Let $X$ be a smooth absolutely irreducible scheme over an arbitrary field $K$, and let $L$ 
be an extension of $K$.
Let $\alpha \in \CH^1(X)$.
If 
$\CH_{L}(\alpha) \in \CH^1 (X_L)$ 
is nonnegative, then $\alpha \in \CH^1(X)$ is nonnegative.\qed
\end{lemma}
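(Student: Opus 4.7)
The plan is to reduce to line bundles and apply flat base change for $H^0$. By Proposition \ref{chowmorphism} the isomorphism $\Pic(X) \cong \CH^1(X)$ is compatible with extension of scalars, so writing $\alpha = [\mathcal{L}]$ for some line bundle $\mathcal{L}$ on $X$, the class $\CH_L(\alpha)$ corresponds to $[\mathcal{L}_L]$. Because $X$ is smooth and absolutely irreducible it is integral, and absolute irreducibility ensures that $X_L$ is integral as well. On a smooth integral scheme, Weil and Cartier divisors coincide, and a class in $\CH^1$ is nonnegative if and only if it contains an effective divisor, equivalently if and only if the corresponding line bundle admits a nonzero global section: an effective divisor $D$ representing the class gives the canonical section of $\OO(D) \cong \mathcal{L}$, and conversely a nonzero global section has an effective zero locus representing the class. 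Applying this equivalence on both $X$ and $X_L$ reduces the lemma to showing that $H^0(X_L, \mathcal{L}_L) \neq 0$ implies $H^0(X, \mathcal{L}) \neq 0$.

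I would then invoke flat base change. The morphism $\Spec L \to \Spec K$ is flat (as every field extension is), and the structure morphism $X \to \Spec K$ is quasi-compact and quasi-separated (being of finite type over a field), so the natural map $H^0(X, \mathcal{L}) \otimes_K L \to H^0(X_L, \mathcal{L}_L)$ is an isomorphism of $L$-vector spaces. Since $L$ is a nonzero $K$-vector space, this tensor product vanishes if and only if $H^0(X, \mathcal{L}) = 0$; contrapositively, nonvanishing on the right forces nonvanishing on the left. A nonzero global section of $\mathcal{L}$ then has an effective zero divisor on $X$ whose class in $\CH^1(X)$ is $\alpha$, as required.

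The only point that requires care is the equivalence between nonnegativity of a class in $\CH^1$ and nonvanishing of $H^0$ of the corresponding line bundle; this uses smoothness and integrality on both sides, and the absolute irreducibility of $X$ is exactly what guarantees that $X_L$ remains integral so the same equivalence can be applied there. Flat base change for $H^0$ requires no properness, so no further hypotheses on $X$ enter the argument.
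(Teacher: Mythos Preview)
The paper does not prove this lemma at all: it is stated with a terminal \qed and attributed to \cite[Corollary 2.2]{karpenkoduke}, so there is no in-paper argument to compare against. Your proposal supplies a correct self-contained proof. The reduction to line bundles via $\Pic(X)\cong\CH^1(X)$, the equivalence ``nonnegative class $\Leftrightarrow$ $H^0$ of the associated line bundle is nonzero'' on a smooth integral scheme, and the flat base change isomorphism $H^0(X,\mathcal{L})\otimes_K L \cong H^0(X_L,\mathcal{L}_L)$ are all valid, and you correctly use absolute irreducibility together with smoothness to ensure $X_L$ is integral so the same equivalence applies upstairs. This is in fact the standard route to this fact and is essentially the argument behind the cited corollary in \cite{karpenkoduke}.
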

The following proposition is like Proposition 5.1 in \cite{karpenkoduke}, but in a different situation.
\begin{proposition}\label{subvarexists}
Let $G$ be a semisimple split simply connected algebraic group over 
an arbitrary field $F$.
Let $B$ be a Borel subgroup, and let $D_1, \ldots, D_r \subset G/B$ be the Schubert divisors.
Suppose that the product $[D_1]^{n_1} \ldots [D_r]^{n_r}$ is multiplicity-free.

Let $K$ be a field extension of $F$, and let $E$ be a $G_K$-torsor.
Then 
there exists a 
closed integral
subscheme $Y$ of $E/B_K$ of codimension $n_1 + \ldots + n_r$
such that $Y_{K(E/B_K)}$ has a rational point.
\end{proposition}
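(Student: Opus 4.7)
The plan is to pull back the Schubert divisor classes from the function-field base change to effective divisors on $E/B_K$, intersect them using the positivity theorem, and extract an irreducible component using the multiplicity-free hypothesis.

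First, set $K' = K(E/B_K)$. Then $E/B_K$ has a rational point over $K'$ (the generic one), so by Lemma \ref{detfuncscoincide} the torsor $E_{K'}$ is trivial, and $(E/B_K)_{K'} \cong (G/B)_{K'}$. Under this identification, the Schubert divisor classes $[D_i]$ live in $\Pic((E/B_K)_{K'})$. By Proposition \ref{propositionpicards} applied to $E$ and the extension $K'/K$, each $[D_i]$ lifts to a unique class $\tilde D_i \in \Pic(E/B_K)$. Since $[D_i]$ is represented by an effective divisor, Lemma \ref{scalarextpreservesnonneg} shows that $\tilde D_i$ is nonnegative, so I fix an effective Cartier divisor $D_i' \subset E/B_K$ representing $\tilde D_i$.

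Next, by Lemma \ref{ebisgbgs} the tangent bundle of $E/B_K$ is globally generated, so iterating Theorem \ref{nonnegativeintersection} shows that the product class $\tilde D_1^{n_1}\cdots \tilde D_r^{n_r} \in \CH^{n_1+\ldots+n_r}(E/B_K)$ is represented by a nonnegative cycle $\zeta = \sum_j m_j [Y_j]$, where each $Y_j \subset E/B_K$ is irreducible, reduced, of codimension $n_1+\ldots+n_r$, and contained in $D_1' \cap \ldots \cap D_r'$. By Proposition \ref{chowmorphism}, the base change $\zeta_{K'}$ represents the product $[D_1]^{n_1}\cdots[D_r]^{n_r}$ in $\CH((G/B)_{K'})$; by the multiplicity-free hypothesis this class contains some $[Z_w]$ with coefficient $1$. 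Comparing coefficients of $[Z_w]$ in $\zeta_{K'} = \sum_j m_j [(Y_j)_{K'}]$ and using that the total coefficient equals $1$ (a sum of nonnegative integers), exactly one $Y_j$ has $Z_w$ as an irreducible component of $(Y_j)_{K'}$, with $m_j = 1$ and geometric multiplicity $1$. Taking $Y$ to be this $Y_j$ gives an irreducible, reduced, closed subscheme of the required codimension, and since every Schubert variety $Z_w$ contains the $B$-fixed rational point $eB/B \in G/B$, the scheme $Y_{K'} \supseteq Z_w$ has a $K'$-rational point.

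The main obstacle is establishing the effective lifts $D_i'$ of the Schubert divisors to $E/B_K$ itself: this rests crucially on the Picard group isomorphism of Proposition \ref{propositionpicards} (so that the divisor classes lift at all) combined with Lemma \ref{scalarextpreservesnonneg} (so that the lifts are representable by honest effective divisors, not merely as differences of effective ones). Once these are in hand, the remainder is a standard Fulton-style positive-intersection argument mirroring the structure of \cite[Proposition 5.1]{karpenkoduke}, with the multiplicity-free hypothesis performing the combinatorial work of isolating an irreducible component of codimension $n_1+\ldots+n_r$ that acquires a rational point after base change.
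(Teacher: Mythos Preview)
Your overall architecture matches the paper's: pass to the function field $K'=K(E/B_K)$, use Proposition~\ref{propositionpicards} and Lemma~\ref{scalarextpreservesnonneg} to pull back the Schubert divisor classes to nonnegative classes $\tilde D_i$ on $E/B_K$, and then intersect using Theorem~\ref{nonnegativeintersection} to get a nonnegative cycle $\zeta=\sum_j m_j[Y_j]$.

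The gap is in your extraction step. From the fact that the coefficient of $[Z_w]$ in the Schubert expansion of $\zeta_{K'}$ equals~$1$, you conclude that $Z_w$ occurs as an irreducible component of some $(Y_j)_{K'}$. This inference is not valid: an irreducible subvariety of $(G/B)_{K'}$ can have Schubert coefficient~$1$ without being (or containing) $Z_w$. For a small example, take $G=SL_2\times SL_2$, so $G/B\cong\PP^1\times\PP^1$; the diagonal is irreducible with class $[\mathrm{pt}\times\PP^1]+[\PP^1\times\mathrm{pt}]$, so each Schubert coefficient is~$1$, yet neither Schubert curve is a component of the diagonal. Hence your conclusion $Y_{K'}\supseteq Z_w$ is unjustified, and with it the rational point.

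The paper repairs exactly this step by one further use of Theorem~\ref{nonnegativeintersection}. Choosing $v\in W$ with Schubert coefficient~$1$ and setting $v'=vw_0$, Demazure's duality gives $[D_1]^{n_1}\cdots[D_r]^{n_r}\cdot[Z_{v'}]=[\pt]$. Applying Theorem~\ref{nonnegativeintersection} over $K'$ to each product $[(Y_j)_{K'}]\cdot[(Z_{v'})_{K'}]$ yields nonnegative $0$\nobreakdash-cycles supported on $(Y_j)_{K'}\cap (Z_{v'})_{K'}$; since their weighted sum has degree~$1$, the total is a single rational point, lying in some $(Y_j)_{K'}$. That $Y_j$ is your $Y$. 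In other words, rather than trying to force $Z_w$ to appear as a component, you should pair with the dual Schubert class and read off the rational point from the resulting degree\nobreakdash-$1$ zero\nobreakdash-cycle.
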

\begin{proof}
Denote $X = E/B_K$ and $L 
= K(X)$. 
Write
\begin{equation*}
[D_1]^{n_1}[D_2]^{n_2}\dots [D_r]^{n_r}=\sum c_{w,n_1,\ldots,n_r}[Z^w].
\end{equation*}
Fix an element $v \in W$ such that $c_{v,n_1,\ldots,n_r} = 1$. Set $v'=v w_0$.
Then it follows from 
\cite[\S 3.3, Proposition 1a]{demazure}
that 
$[D_1]^{n_1} \ldots [D_r]^{n_r}[Z^{v'}] = [\pt]$.
By Proposition \ref{chowmorphism},
we have 
$[(D_1)_L]^{n_1} \ldots [(D_r)_L]^{n_r} [(Z^{v'})_L] = [\pt] \in \CH((G/B)_L)$.

It is easy to see that $X_L$ has a rational point. 
By 
Corollary \ref{detfuncscoincide},
$E_L$ also has a rational point. 
Then by 
Remark \ref{etrivebisogb},
$X_L$ is isomorphic to $(G/B)_L$.
Fix one such isomorphism (it depends on the choice of a rational point of $E_L$) and denote it by $f \colon X_L \to (G/B)_L$.

Denote 
the composition $f_* \circ \CH_L \colon \CH (X) \to \CH (X_L) \to \CH ((G/B)_L)$
by $g$.
Denote $g_1 = g|_{\CH^1 (X)}$.
By Proposition \ref{propositionpicards} (and by Proposition \ref{chowmorphism}), 
$g_1$ is an isomorphism between $\CH^1 (X)$ and $\CH^1 ((G/B)_L)$
For each $i$ ($1 \le i \le r$), 
denote $\alpha_i = g_1^{-1} ((D_1)_L) \in \CH^1 (X)$.
By Lemma \ref{scalarextpreservesnonneg}, these are nonnegative classes (although we don't claim that 
each $\alpha_i$ is representable by a single irreducible and reduced divisor).

By Theorem \ref{nonnegativeintersection}, 
the class $\alpha_1^{n_1} \ldots \alpha_r^{n_r}$ is nonnegative.
Choose integral subvarieties $Y_i \subseteq X$ of codimension $n_1 + \ldots + n_r$
such that $\alpha_1^{n_1} \ldots \alpha_r^{n_r}$ can be written as their linear combination 
with nonnegative coefficients. Denote these coefficients by $k_i \ge 0$:
\begin{equation*}
\alpha_1^{n_1} \ldots \alpha_r^{n_r} = \sum k_i [Y_i].
\end{equation*}

It is clear from the definitions that for each $i$, $g([Y_i])$ is a linear combination 
of the irreducible components of $f((Y_i)_L)$ with nonnegative coefficients.
Since $g$ is a morphism of rings (Proposition \ref{chowmorphism}), we have
\begin{equation*}
g\left(\sum k_i [Y_i]\right)[(Z^{v'})_L] = [(D_1)_L]^{n_1} \ldots [(D_r)_L]^{n_r} [(Z^{v'})_L] = [\pt].
\end{equation*}
On the other hand,
$g(\sum k_i [Y_i])[(Z^{v'})_L] = \sum (k_i g([Y_i]) [(Z^{v'})_L])$, and by Theorem \ref{nonnegativeintersection}, 
each $g([Y_i]) [(Z^{v'})_L]$ is (can be written as) a linear combination
of (reduced) 0-dimensional subvarieties (i.~e. closed points) of $f((Y_i)_L) \cap (Z^{v'})_L$
with nonnegative coefficients.

So, a rational point of $(G/B)_L$ is equivalent in the Chow ring to a linear combination of some closed points 
with nonnegative coefficients. Then it follows from the well-definedness of the degree map 
$\CH^{\dim (G/B)} ((G/B)_L) \to \ZZ$ (see \cite[Definition 1.4]{fulton}) that the linear combination 
actually consists of just one point with coefficient 1, and this point is rational. 
Recall that this was a point in some intersection $f((Y_i)_L) \cap (Z^{v'})_L$. 
In particular, we see that for one of the schemes $Y_i$, $(Y_i)_L$ has a rational point, and we can set $Y=Y_i$.

(We don't need this, but 
for this index $i$ we also get $k_i = 1$, and for all other indices $i$ 
we get $g([Y_i]) [(Z_{v'})_L]=0$ or $k_i=0$.)
\end{proof}

\begin{proof}[(Last steps of the) Proof of Theorem \ref{maintheorem}]
Let $F$ be the base field of $G$.



Let $K$ be an extension of $F$, let $E$ be a $G_K$-torsor. Denote $L = K(E/B_K)$.
By Proposition \ref{subvarexists}, there exists a subscheme $Y \subseteq E/B_K$ of codimension $n_1 + \ldots + n_r$
such that $Y_L$ has a rational point. 
Then it follows easily that there exists a rational map $E/B_K \dashrightarrow Y$.
As we already mentioned in the Introduction, since $E/B_K$ is smooth and projective, 
by \cite[Theorem 1.7]{merkurjevcontmath}, $\cd (E/B_K)$ can be computed as the minimal dimension 
of a subscheme $Y' \subseteq E/B_K$ admitting a rational map $E/B_K \dashrightarrow Y'$.
Hence, $\cd(E/B_K) \le \dim Y = \dim (G/B) - n_1 - \ldots - n_r$.

Therefore, we get 
$\cd(E) \le \dim (G/B) - n_1 - \ldots - n_r$ by Corollary \ref{cdeebequal} and 
$\cdg (G) \le \dim (G/B) - n_1 - \ldots - n_r$ by the definition of $\cdg (G)$.
\end{proof}

\appendix

\makeatletter
\def\@seccntformat#1{\appendixname\ \csname the#1\endcsname. }
\makeatother

\section{Detailed calculations for proofs in Section \ref{sectionpicard}}\label{picardcalculations}

In this section, we will do several calculations with morphisms between schemes over non-algebraically closed fields.
To make these calculations convenient, especially when we work with morphisms 
between products of schemes, let us say a few words about notations for such morphisms.
Recall that a corollary of Yoneda lemma says that if, for any object $X$ in a category, $h_X$ denotes the functor $\Hom (-,X)$, 
then for each pair of objects $X$ and $Y$ there is a (functorial in $X$ and $Y$) bijection between $\Hom (X, Y)$ and $\Hom (h_X, h_Y)$.
More explicitly, this bijection sends a morphism $f \colon X \to Y$ to the morphism of functors 
$f \circ {} \colon h_X \to h_Y$, which, for each object $T$, maps $x \colon T \to X$ to $f \circ x$.
The inverse bijection sends a morphism of functors $f \colon h_X \to h_Y$ to $f (\id_X) \in h_Y (X) = \Hom (X, Y)$.

So, first, when we need to combine several morphisms of schemes, we will view each morphism $f \colon X \to Y$ 
at the same time as a map $\Hom (T, X) \to \Hom (T, Y)$ for each scheme (``test scheme'') $T$, and we will also denote this
map $\Hom (T, X) \to \Hom (T, Y)$ by $f$. In other words, if $x \colon T \to X$ is a morphism, we write $f(x)$ instead of $f \circ x$.
Given two morphisms of schemes, $f \colon X \to Y$ and $g \colon Y \to Z$, 
we can denote their composition as follows: for each test scheme $T$, each morphism $x \colon T \to X$ is mapped to $g (f(x)):=g \circ f \circ x$.
Second, instead of writing morphisms to products of two (or more) schemes, we will write pairs (or tuples) consisting of 
compositions of these morphisms with projections onto the factors. For example, if $x \colon T \to X$ and $y \colon T \to Y$
are two arbitrary morphisms, then $x, y$ denotes the morphism $T \to X \times Y$ whose compositions with projections 
are $x$ and $y$. And if $f \colon X \times Y \to Z$ is another morphism, then $f (x,y) \colon T \to Z$ is the composition 
of $x,y$ and $f$. Third, given a rational point $x_0$ of a scheme $X$, we also, for any test scheme $T$, denote by $x_0$ 
the morphism $T \to X$ that maps everything to $x_0$.

Fourth, if a scheme $G$ actually is (or is equipped with the structure of) an algebraic group, 
then we denote the compositions of maps to $G$ with the product map $m \colon G \times G \to G$ and the inversion map $\inv \colon G \to G$
exactly as we denote the products and the inverses of rational points: 
if $T$ is an arbitrary test scheme, and $g_1 \colon T \to G$ and $g_2 \colon T \to G$ are arbitrary maps, 
then we write $g_1g_2$ instead of $m (g_1, g_2)$ and $g_1^{-1}$ instead of $\inv (g_1)$. 
Also, since $1_G \in G$ is a rational point, as we said before, $1_G$ will also denote the map $T \to G$
that maps everything to $1_G$. 
And if $G$ acts a scheme $X$ via a morphism $\varphi \colon G \times X \to X$
(and there is no ambiguity of which action of $G$ on $X$ we are talking about), then we also use multiplicative notation for 
the compositions of maps to $G \times X$ and $\varphi$: if we have maps $g \colon T \to G$ and $x \colon T \to X$, 
then we write $gx$ instead of $\varphi(g,x)$.

Finally, suppose we have schemes $X_1, \ldots, X_n$ and $Y_1, \ldots, Y_m$ and morphisms between some of them, and we want 
to define a morphism $X_1 \times \ldots \times X_n \to Y_1 \times \ldots \times Y_m$. 
To do this, we fix an arbitrary scheme (``test scheme'') $T$ and arbitrary morphisms $x_i \colon T \to X_i$, 
together they define a morphism $x_1, \ldots, x_n \colon T \to X_1 \times \ldots \times X_n$.
Then we write an expression in terms of $x_i$'s and the morphisms between $X_i$'s and $Y_j$'s, which, 
using the conventions above, defines a morphism $T \to Y_1 \times \ldots \times Y_m$.
So, we have defined a map $\Hom (T, X_1 \times \ldots \times X_n) \to \Hom (T, Y_1 \times \ldots \times Y_m)$, 
and one can easily check (for example, by induction on the length of the formula) 
that any such map constructed by the above rules is functorial in $T$.
So, by Yoneda lemma, we have a morphism $X_1 \times \ldots \times X_n \to Y_1 \times \ldots \times Y_m$.
More explicitly, we can then set $T = X_1 \times \ldots \times X_n$, and set $x_i$ to be the projection onto the $i$th factor
(this will make $x_1, \ldots, x_n = \id_{X_1 \times \ldots \times X_n}$).
Then the expression we have for the morphism $T \to Y_1 \times \ldots \times Y_m$
will already become a morphism $X_1 \times \ldots \times X_n \to Y_1 \times \ldots \times Y_m$, 
which we wanted to define. Then we will not even need the functoriality of the map 
$\Hom (T, X_1 \times \ldots \times X_n) \to \Hom (T, Y_1 \times \ldots \times Y_m)$ in $T$,
and we will not need to actually use Yoneda lemma (but the idea is still based on it, of course).

This way we can write morphisms between schemes in the same way as over an algebraically closed field, 
where it is enough to define the images of rational points.

Now let us do the omitted calculations for Section \ref{sectionpicard} using this approach.
As the first application, 
let us check that the map $\triv_e$ from Section \ref{sectionpicard} is an equivariant isomorphism.
Recall that by the definition of a torsor, 
if $G$ is an algebraic group and $(E, \varphi)$ is a $G$-torsor, then 
$(\varphi, \pr_2) \colon G \times E \to E \times E$
is an isomorphism.
\begin{lemma}\label{triveiso}
If $G$ is an algebraic group, $(E, \varphi)$ is a $G$-torsor, and $e \in E$ is a rational point, 
then $\triv_e=\varphi|_{G \times \{e\}} \colon G \to E$ is a $G$-equivariant isomorphism for the left action of $G$ on itself.
We have $\triv_e^{-1}=\pr_1 \circ (\varphi, \pr_2)^{-1}|_{E \times \{e\}}$.
\end{lemma}
\begin{proof}
To check that $\triv_e$ is an isomorphism, we have to check that for our formulas of $\triv_e$ and $\triv_e^{-1}$, we have 
$\triv_e \circ \triv_e^{-1} = \id_E$ and $\triv_e^{-1} \circ \triv_e = \id_G$.

For the first composition, we have to check the equality of two maps from $E$ to $E$. 
So, for our approach to notation, let us use $E$ as the test scheme, 
and denote the identity map on $E$ (``the projection of the product of a single factor $E$ to the first factor'') by $x$.
Very informally, this $x$ plays the role of an arbitrary rational point of $E$ ``if the base field had been algebraically closed''.
First, we have
$\triv_e (\triv_e^{-1}(x))=\varphi ( \pr_1 ( (\varphi, \pr_2)^{-1} (x, e) ) , e)$.
Next, since $(\varphi, \pr_2)^{-1}$ is the inverse of $(\varphi, \pr_2)$, we have 
$(\varphi, \pr_2) ((\varphi, \pr_2)^{-1} (x,e)) = (x,e)$, in particular, 
$\pr_2 ((\varphi, \pr_2)^{-1} (x,e)) = e$.
So, we can further write 
\begin{equation*}
\varphi ( \pr_1 ( (\varphi, \pr_2)^{-1} (x, e) ) , e) = 
\varphi ( \pr_1 ( (\varphi, \pr_2)^{-1} (x, e) ) , \pr_2 ( (\varphi, \pr_2)^{-1} (x, e) )) = 
\varphi ( (\varphi, \pr_2)^{-1} (x, e) ) = x,
\end{equation*}
and the first composition is indeed the identity map.

For the second composition, we check the equality of maps from $G$ to $G$, use $G$ as the test scheme, 
and denote the identity map on $G$ by $g$.
We have
\begin{equation*}
\triv_e^{-1} (\triv_e(g))=
\pr_1 ( (\varphi, \pr_2)^{-1} (\varphi(g,e), e) )=
\pr_1 ( (\varphi, \pr_2)^{-1} (\varphi(g,e), \pr_2(g,e)) )=
\pr_1(g,e)=g.
\end{equation*}

Finally, the $G$-equivariance of $\triv_e$ is an equality of two maps from $G \times G$ to $E$: informally, 
we either first act by $G$ on $G$ and then apply $\triv_e$, 
or we first apply $\triv_e$ to the second factor $G$ and then act by the first factor $G$ on the result.
So, let us use $G \times G$ as the test scheme, and denote the projections to the two factors by $g_1$ and $g_2$.
Then we have to check that $\triv_e(g_1g_2) = \varphi(g_1,\triv_e(g_2))$. 
Indeed, since $\varphi$ is an action, we have:
\begin{equation*}
\triv_e(g_1g_2)=\varphi(g_1g_2, e)=\varphi(g_1,\varphi(g_2, e))=\varphi(g_1,\triv_e(g_2)).
\end{equation*}
\end{proof}
With this lemma and with our approach to notation, we can write the following identities for the maps $\triv$, $\trivinv$, and $\trivinvinv$
(recall that we omit the subscript $e$ when it is clear which rational point of $E$ we assume).
\begin{corollary}\label{triveformulas}
For $G$ and $(E,\varphi)$ as in Lemma \ref{triveiso}, for arbitrary maps from a test scheme $g_1, g_2 \colon T \to G$ and $x \colon T \to E$, 
and using multiplicative notation for the action $\varphi$, we have:
\begin{gather*}
\triv(g_1)=g_1e, \quad \trivinv(x) = \triv^{-1}(x), \quad \trivinvinv(x) = (\trivinv(x))^{-1}=(\triv^{-1}(x))^{-1}; \\
g_1 \triv(g_2) = \triv(g_1g_2), \quad g_1 \trivinv (x) = \trivinv (g_1 x), \quad \trivinvinv (x) g_1^{-1} = \trivinvinv (g_1 x).
\end{gather*}
\qed
\end{corollary}

Now let us do the rest of calculations omitted in Section \ref{sectionpicard}.

\begin{proof}[Detailed proof of Lemma \ref{gblisquot}]
Like in the previous proof, now the $G_L$-invariance of $p$ is an equality of two 
maps from $G_L \times E_L \times (G/B)_L$ to $(G/B)_L$.
So, for our approach to notation, let us use $G_L \times E_L \times (G/B)_L$ as the test scheme, 
and denote the projections to the factors by $g$, $x$, and $y$, respectively.
We have to check that $p(\varphi_L(g,x), \xi(g,y))=p(x,y)$ 
(recall that $\xi \colon G_L \times (G/B)_L \to (G/B)_L$
is the action morphism).
In terms of our current notation, we can write $p$ as $p(x,y)=\xi(\trivinvinv_e(x),y)$.
We have (using Corollary \ref{triveformulas}, using multiplicative notation for the actions $\xi$ and $\varphi_L$, and 
assuming the subscript $e$ for $\trivinvinv$):
\begin{equation*}
p(\varphi_L(g,x), \xi(g,y))=
\trivinvinv(gx) (g y)=
(\trivinvinv(x) g^{-1}g) y=
\trivinvinv(x) y=
p(x,y).
\end{equation*}

For the universal property, suppose we have a $G_L$-invariant map $q \colon E_L \times (G/B)_L \to Z$.
We have to prove that there exists a unique map $r \colon (G/B)_L \to Z$ such that $q = r \circ p$.
Note first that the restriction of $p$ to $\{e\} \times (G/B)_L \cong (G/B)_L$ is the identity map on $(G/B)_L$.
So, if $r$ exists, it must equal $q|_{\{e\} \times (G/B)_L}$.

Let us check that we indeed have $q = r \circ p$ for this $r$. We have to check an equality of two maps from $E_L \times (G/B)_L$, 
so we use $E_L \times (G/B)_L$ as the test scheme and denote the projections to the two factors by $x$ and $y$, respectively. 
Using the $G_L$-invariance of $q$, we have (with all the same conventions for calculations)
\begin{equation*}
r(p(x,y))=
q(e,\trivinvinv(x)y)=
q(\trivinv (x)e,y)=
q(\triv(\trivinv(x)),y)=
q(x,y).
\end{equation*}
\end{proof}

\begin{lemma}\label{psitwoissemirep}
In the proof of Lemma \ref{actonebisrepres}, 
the formulas (\ref{newgrep}) together (for all $\sigma \in \Gamma$) indeed form a $\Gamma$-semirepresentation on $L \otimes_K V_\lambda$.
\end{lemma}
\begin{proof}
We have already seen in the proof of Lemma \ref{actonebisrepres} that 
each $\widetilde{\psi}_{2, \sigma}$ is a $\sigma$-semilinear map of vector spaces, 
so all we have to check now is that,
for each $\sigma, \tau \in \Gamma$, 
we have
$\widetilde{\psi}_{2,\tau} \circ \widetilde{\psi}_{2,\sigma}=
\widetilde{\psi}_{2,\tau \sigma}$.
Both sides are $(\tau \sigma)$-semilinear maps from the vector space $L \otimes_K V_\lambda$ to itself.
This time, in the calculations, we are not going to talk about morphisms between schemes. 
We are going to talk about (maybe, arbitrary) vectors from the vector space $L \otimes_K V_\lambda$ and 
(fixed) rational points of $G_L$ and of $E_L$.
So, we don't use Yoneda's lemma-based notation this time, and fix an arbitrary vector 
$x \in L \otimes_K V_\lambda$.
We use multiplicative notation for the representation of $G_L$ in $L \otimes_K V_\lambda$
and omit the subscript $e$ for $\trivinvinv$. We have:
\begin{multline*}
\widetilde{\psi}_{2,\tau} \big(\widetilde{\psi}_{2,\sigma}(x)\big) =
\Xi_{\trivinvinv_e(\psi_2 (\tau, e))} \bigg(\widetilde{\psi}_{1,\tau} 
  \Big(\Xi_{\trivinvinv_e(\psi_2 (\sigma,e))} \big(\widetilde{\psi}_{1,\sigma}(x)\big)\Big)\bigg) = 
\trivinvinv\big(\psi_{2,\tau} (e)\big) \widetilde{\psi}_{1,\tau} \Big(\trivinvinv\big(\psi_{2,\sigma}(e)\big) \widetilde{\psi}_{1,\sigma}(x)\Big) = \\
\trivinvinv\big(\psi_{2,\tau} (e)\big) \psi_{1,\tau} \Big(\trivinvinv\big(\psi_{2,\sigma} (e)\big)\Big) 
  \widetilde{\psi}_{1,\tau} \big(\widetilde{\psi}_{1,\sigma}(x)\big)= 
\underbrace{\bigg(\trivinvinv\big(\psi_{2,\tau} (e)\big) \psi_{1,\tau} \Big(\trivinvinv\big(\psi_{2,\sigma} (e)\big)\Big)\bigg)}_{\text{first factor}} 
  \widetilde{\psi}_{1,\tau\sigma}(x).
\end{multline*}
Now let us do some calculations with the first 
factor
in the last formula, using multiplicative notation for $\varphi_L$.
\begin{multline*}
\trivinvinv\big(\psi_{2,\tau} (e)\big) \psi_{1,\tau} \Big(\trivinvinv\big(\psi_{2,\sigma} (e)\big)\Big)=
\Big(\trivinv\big(\psi_{2,\tau} (e)\big)\Big)^{-1} \psi_{1,\tau} \bigg(\Big(\trivinv\big(\psi_2(\sigma, e)\big)\Big)^{-1}\bigg)=\\
\Big(\trivinv\big(\psi_{2,\tau} (e)\big)\Big)^{-1} \bigg(\psi_{1,\tau} \Big(\trivinv\big(\psi_2(\sigma, e)\big)\Big)\bigg)^{-1}=
\bigg(\psi_{1,\tau} \Big(\trivinv \big(\psi_{2,\sigma} (e)\big)\Big) \trivinv\big(\psi_{2,\tau} (e)\big)\bigg)^{-1}=\\
\bigg(\trivinv\Big(\psi_{1,\tau} \big(\trivinv(\psi_{2,\sigma} (e))\big) \psi_{2,\tau} (e)\Big)\bigg)^{-1}=
\trivinvinv \bigg(\psi_{2,\tau} \Big(\trivinv\big(\psi_{2,\sigma} (e)\big)e\Big)\bigg)=\\
\trivinvinv \bigg(\psi_{2,\tau} \Big(\triv\big(\trivinv(\psi_{2,\sigma} (e))\big)\Big)\bigg)=
\trivinvinv \Big(\psi_{2,\tau} \big(\psi_{2,\sigma} (e)\big)\Big)=
\trivinvinv \big(\psi_{2,\tau \sigma} (e)\big).
\end{multline*}
So, for the whole formula for $\overline{\psi}_{2,\tau} (\overline{\psi}_{2,\sigma} (x))$, we get
\begin{equation*}
\widetilde{\psi}_2 \big(\tau, \widetilde{\psi}_2 (\sigma, x)\big) =
\trivinvinv\big(\psi_{2,\tau \sigma} (e)\big) \widetilde{\psi}_{1,\tau\sigma} (x)=
\widetilde{\psi}_{2, \tau\sigma} (x).
\end{equation*}
\end{proof}

\begin{proof}[Detailed proof of Lemma \ref{pequivar}]
In this proof, we also use the notation based on Yoneda lemma for semilinear morphisms. 
All products are still considered in $\Sch_L$. 
If $x$ and $y$ are two $\sigma$-semilinear 
morphisms defined on the same scheme and \emph{with the same $\sigma$}, 
then the notation $x,y$ described in the beginning of this section still makes sense due to Lemma \ref{propdirectproducts}
and denotes a $\sigma$-semilinear morphism.

For the equivariance of $p$, fix an arbitrary $\sigma \in \Gamma$.
We have to check that 
\begin{equation*}
\overline{\psi}_{2,\sigma} \circ p = 
p \circ (\psi_{2,\sigma} \times \overline{\psi}_{1,\sigma}).
\end{equation*}
These are two $\sigma$-semilinear morphisms $E_L \times (G/B)_L \to (G/B)_L$, so 
we use $E_L \times (G/B)_L$ as the test scheme and denote the projections to the factors by $x$ and $y$.
We have (like usually, using multiplicative notation for $\xi$ and omitting the subscript $e$)
\begin{multline*}
\overline{\psi}_{2,\sigma}\big(p(x,y)\big)=
\xi \bigg(\trivinvinv_e\big(\psi_2(\sigma, e)\big), \overline{\psi}_{1,\sigma} \Big(\xi\big(\trivinvinv_e(x),y\big)\Big)\bigg)= 
\trivinvinv\big(\psi_{2,\sigma} (e)\big) \overline{\psi}_{1,\sigma} \big(\trivinvinv(x)y\big)= \\
\underbrace{\trivinvinv\big(\psi_{2,\sigma} (e)\big) \psi_{1,\sigma}\big(\trivinvinv(x)\big)}_{\text{first factor}}\overline{\psi}_{1,\sigma}(y).
\end{multline*}
Like in the previous proof, let us do some calculations with the first 
factor
in the last formula, using multiplicative notation for $\varphi_L$.
\begin{multline*}
\trivinvinv\big(\psi_{2,\sigma} (e)\big) \psi_{1,\sigma}\big(\trivinvinv(x)\big)=
\Big(\trivinv\big(\psi_{2,\sigma} (e)\big)\Big)^{-1} \psi_{1,\sigma}\Big(\big(\trivinv(x)\big)^{-1}\Big)=
\Big( \psi_{1,\sigma}\big(\trivinv(x)\big) \trivinv\big(\psi_{2,\sigma} (e)\big)\Big)^{-1} =\\
\bigg( \trivinv\Big(\psi_{1,\sigma}\big(\trivinv(x)\big)\psi_{2,\sigma} (e)\Big)\bigg)^{-1}=
\trivinvinv\Big(\psi_{2,\sigma}\big(\trivinv(x)e\big)\Big) =
\trivinvinv\bigg(\psi_{2,\sigma}\Big(\triv\big(\trivinv(x)\big)\Big)\bigg) =
\trivinvinv\big(\psi_{2,\sigma}(x)\big).
\end{multline*}
Now, for the whole formula for $\overline{\psi}_{2,\sigma}(p(x,y))$, we have 
\begin{equation*}
\overline{\psi}_{2,\sigma}\big(p(x,y)\big)=
\trivinvinv\big(\psi_{2,\sigma}(x)\big) \overline{\psi}_{1,\sigma}(y)=
p\big(\psi_{2,\sigma}(x), \overline{\psi}_{1,\sigma}(y)\big).
\end{equation*}

For the second statement, we also fix an arbitrary $\sigma \in \Gamma$. 
The $\Gamma$-equivariance of $p$ (which we have just proved) means that 
\begin{equation*}
p = 
\overline{\psi}_{2,\sigma^{-1}} \circ p \circ
(\psi_{2,\sigma} \times \overline{\psi}_{1,\sigma}).
\end{equation*}
The $\Gamma$-equivariance of $q$ means that 
\begin{equation*}
q = 
\psi_{3,\sigma^{-1}} \circ q \circ
(\psi_{2,\sigma} \times \overline{\psi}_{1,\sigma}).
\end{equation*}
Therefore, in addition to $q = r \circ p$, for 
\begin{equation*}
r' := 
\psi_{3,\sigma^{-1}} \circ r \circ
\overline{\psi}_{2,\sigma},
\end{equation*}
we also have $q = r' \circ p$. By the uniqueness in the universal property, we have $r'=r$.
Repeating this argument for all $\sigma \in \Gamma$, we get the $\Gamma$-equivariance of $r$.
\end{proof}

\section{Idea of proof of Proposition \ref{propositionpicards} using \'etale sheaves}\label{appendixetale}

Roughly speaking, the idea of this section is to define several \'etale sheaves on 
the quotient of a torsor modulo the Borel subgroup,
to prove that they form an exact sequence, 
to express 
the Picard group
using the long exact sequence of cohomology, and, finally, to see how this construction behaves when we extend the base field.
For a reference on \'etale sheaves, see \cite[Chapters I and II]{sga45}.


Let us recall some basic properties of restriction of scalars and of \'etale morphisms.
Recall that if $L/K$ is a finite extension of fields, and $X$ is a scheme over $L$, then 
we denote by $X|_K$ the scheme $X$ with base field restricted to $K$.

\begin{proposition}\label{basicetaledowngrade}
Let $L/K$ be a finite extension of fields. Then:
\begin{enumerate}
\item If the extension $L/K$ is separable, then $L$ is an \'etale $K$-algebra. This also means that the dual morphism $(\Spec L)|_K \to \Spec K$
is \'etale.
\item 
\label{canonicaldowngrade}
If $X$ is a variety over $K$, then there is a canonical surjective morphism $(X_L)|_K \to X$, 
dual to $A \to A \otimes L$, $a \mapsto a \otimes 1$ for a $K$-algebra $A$.
\item 
\label{canonicaldowngradeprop}
If $X \to Y$ is an \'etale morphism of schemes over $K$, and $Z \to Y$ is any morphism of schemes over $K$, 
then the projection $X \times_Y Z \to Z$ is \'etale.
If the extension 
$L/K$ 
is separable, then the morphism $(X_L)|_K \to X$ defined above is \'etale.
\item 
\label{etaleopenandcomposition}
An open immersion is an \'etale map. The composition of two \'etale maps is \'etale.
\item 
\label{etalecoverintegral}
If $X \to Y$ is an \'etale morphism, $X$ is connected, and $Y$ is normal and integral, then $X$ is also normal and integral 
(\cite[Proposition I.9.2 and Corollary I.9.11]{sga1}).\qed
\end{enumerate}
\end{proposition}


\begin{remark}\label{downgradeonefactor}
\begin{enumerate}
\item \label{downgradeonefactorformula}
Let $L/K$ be a finite extension of fields. For any $K$-algebras $A$ and $C$, 
$L$-algebra $D$, and morphisms (over $K$) $A \to C$ and (over $L$) $A\otimes L \to D$, 
we have an isomorphism $(C \otimes L) \otimes_{A \otimes L} D \to C \otimes_A D$, 
$(c \otimes l) \otimes d \mapsto c \otimes ld$.
\item \label{downgradeonefactorfibered} 
Using such isomorphisms for affine charts, we get, for any $K$-varieties $X$ and $Y$, for any $L$-variety $Z$, and for any morphisms $Y \to X$ and $Z \to X_L$,
an isomorphism
$(Y_L \times_{X_L} Z)|_K \cong Y \times_X (Z|_K)$.
(For the latter fibered product, we first apply restriction of scalars to the morphism $Z \to X_L$, and then compose the morphism $Z|_K \to (X_L)|_K$
we get with the standard morphism $(X_L)|_K \to X$ from Proposition \ref{basicetaledowngrade} (\ref{canonicaldowngrade}), 
thus getting $Z|_K\to X$.) This isomorphism is functorial in $Y$ and in $Z$
(and even in $X$, although we don't need this).
\item \label{downgradeonefactordirect}
Taking $A=K$ at the very first step, we get an (again functorial) isomorphism of direct products rather than fibered products:
$(Y_L \times Z)|_K \cong Y \times (Z|_K)$.
\end{enumerate}
\end{remark}




Before we construct \'etale sheaves on $E/B$, let us introduce notation for a few maps related to $E/B$.
First, recall that $E$ was originally defined over a field we 
denoted by $K$, 
and that $E/B$ was originally defined as the categorical quotient $(E \times G/B) / G$, 
which means that we have a ($G$-invariant) categorical quotient map $E \times G/B \to E/B$. 
Let $\ell \in G/B$ again denote the point with stabilizer $B$.
We can compose the ($B$-equivariant) embedding $E \cong E \times \{\ell\} \hookrightarrow E \times G/B$ 
with the categorical quotient map and get a $B$-invariant map $E \to E/B$. Denote this map by $\pi$.
Second, denote the restriction of the action map $\varphi \colon G \times E \to E$ to $B \times E$ by $\varphi_B$.


Now, recall $L$ was a finite Galois extension of $K$ such that $E_L$ is a trivial torsor, 
and that $E/B$ was constructed as the Galois descent of $(G/B)_L$ for a certain semiaction (Proposition \ref{ebconstruction}).
Therefore, we have a canonical surjective map $((G/B)_L)|_K \to E/B$ as explained in Proposition \ref{basicetaledowngrade} (\ref{canonicaldowngrade}).
Denote it by $\rho$.
It also follows from the explicit construction of $E/B$ 
(now we will really need the explicit construction, the definition of $E/B$ as a categorical quotient alone is not enough)
that the map 
$\pi_L \circ (\trivinvinv_e)^{-1} \colon G_L \to (G/B)_L$ 
coincides with the standard orbit map $G_L \to (G/B)_L$.
Denote this orbit map by $\tilde{\pi}$. Finally, denote the right action map of $B_L$ on $G_L$ by 
$\tilde{\varphi}_B$ 
(i.e., $\tilde{\varphi}_B \colon B_L \times G_L \to G_L$, $\tilde{\varphi}_B (b,g) = gb^{-1}$ for rational points).

With this notation, let us find a good open covering for $(G/B)_L$.
\begin{lemma}\label{bruhatcovering}
There exists an finite affine open covering $\{\tilde{U}_i\}$ of $(G/B)_L$ such that for each $i$, $\tilde{\pi}^{-1}(\tilde{U}_i)$ 
can be $B_L$-equivariantly identified with $B_L \times \tilde{U}_i$. The $B_L$-equivariance here is for 
$\tilde{\varphi}_B$ 
on $\tilde{\pi}^{-1}(\tilde{U}_i)$ 
and for the action by left multiplication on the first factor on $B_L \times \tilde{U}_i$. 
After this identification, $\tilde{\pi}$ becomes the projection to the second factor.
\end{lemma}
\begin{proof}
Follows from Bruhat decomposition. 
In more details, let $U^-$ be a maximal unipotent subgroup of $G_L$ opposite to $B_L$.
By \cite[Theorem 21.84]{milne}, the multiplication map $U^- \times B_L \to G_L$ is an open immersion of the big Bruhat cell, 
so we can use $\tilde{U}_1:=\tilde{\pi}(U^- B_L) \cong U^-$ as one of the sets in the open covering.
(To be precise, we have to apply inversion to $B_L$ in $U^- \times B_L$
to get the desired $B_L$-equivariance.) The rest of open covering can be constructed by shifts by the Weyl group.
%
\end{proof}
Now let us find a good \'etale covering for $E/B$.
\begin{lemma}\label{directproductrestriction}
Suppose that $\tilde{U} \subset (G/B)_L$ is an open subset such that $\tilde{\pi}^{-1}(\tilde{U})$ can be 
$B_L$-equivariantly (like in the previous lemma) identified with $B_L \times \tilde{U}$. Let $U=\tilde{U}|_K$.
Then $\rho|_U$ is an \'etale map, and $E \times_{E/B} U$ (for the maps $\pi$ and $\rho|_U$) 
can be $B$-equivariantly identified with $B \times U$.
The $B$-equivariance here is for the actions of $B$ on the first factors, the action of $B$ on itself is by left multiplication.
The projections to the second factor ($U$) commute with this identification.
\end{lemma}
\begin{proof}
First, the \'etality follows from the basic properties of \'etale maps 
(Proposition \ref{basicetaledowngrade} (\ref{canonicaldowngradeprop}, \ref{etaleopenandcomposition})).
Next, by Remark \ref{downgradeonefactor} (\ref{downgradeonefactorfibered}, \ref{downgradeonefactordirect}), we have
\begin{equation*}
E \times_{E/B} U \cong (E_L \times_{(E/B)_L} \tilde{U})|_K \cong (G_L \times_{(G/B)_L} \tilde{U})|_K \cong (\tilde{\pi}^{-1}(\tilde{U}))|_K \cong 
(B_L \times \tilde{U})|_K \cong B \times U.
\end{equation*}
(For the second isomorphism, we use 
$(\trivinvinv_e)^{-1}$
for the isomorphism between $E_L$ and $G_L$.)
The $B$-equivariance is a direct computation using functoriality in 
Remark \ref{downgradeonefactor} (\ref{downgradeonefactorfibered}, \ref{downgradeonefactordirect})
and the explicit formula in Remark \ref{downgradeonefactor} (\ref{downgradeonefactorformula}).
\end{proof}
From now on, for the covering $\{\tilde{U}_i\}$ from Lemma \ref{bruhatcovering}, 
denote $U_i=\tilde{U_i}|_K$. With the maps $\rho|_{U_i}$, this is an \'etale covering of $E/B$ (recall that $\rho$ is surjective).
We also get an \'etale covering $\{E \times_{E/B} U_i\}$ of $E$.

The next step will be to introduce several \'etale sheaves on $E/B$ and maps between them. To do that, we need three more preliminary lemmas.
\begin{lemma}
The map $\pi \colon E \to E/B$ is flat.
\end{lemma}
\begin{proof}
Let us prove that $\pi$ is flat at each point of $E$. Let $x \in E$. 
Choose $U_i$ so that $x$ is in the image of the map $E \times_{E/B} U_i \to E$ (denote this map temporarily by $f$), 
$x=f(y)$, $y \in E \times_{E/B} U_i$. 
The composition $\pi \circ f$ is flat, because it can be written as 
$E \times_{E/B} U_i \cong B \times U_i \to U_i \to E/B$, a composition of the projection to a direct factor and an \'etale map.
In particular, $\pi \circ f$ is flat at $y$.
Also, $f$ is \'etale, so also flat. By \cite[Corollary 2.2.11.iv]{ega4}, $\pi$ is flat at $x$.
\end{proof}
\begin{lemma}\label{aboveconnected}
Let $f \colon V \to E/B$ be an \'etale map, where $V$ is connected. Then $E \times_{E/B} V$ is also connected.
\end{lemma}
\begin{proof}
If $f$ factors through one of the $U_i$s (i.e., if there exists an \'etale map $g \colon V \to U_i$ such that $f = \rho|_{U_i} \circ g$), 
then by Lemma \ref{directproductrestriction}, we can write 
$E \times_{E/B} V \cong E \times_{E/B} U_i \times_{U_i} \times V \cong B \times U_i \times_{U_i} \times V 
\cong B \times V$. The latter is connected since 
by \cite[Theorem 21.11 (c)]{milne} and \cite[Theorem 21.68 (a,b)]{milne}, $B$ as an abstract variety 
is the product of several copies of $K$ (the affine line) and of $K^*$.


For a general $V$, first recall that $E/B$ is integral, and also smooth hence normal. 
So, by Proposition \ref{basicetaledowngrade} (\ref{etalecoverintegral}), $V$ is also integral hence irreducible.
Next, choose an arbitrary $U_i$, and let $V'$ be a connected component of $V \times_{E/B} U_i$ 
(\'etale maps are flat hence dominant, so this fibered product is nonempty).
Assume $E \times_{E/B} V$ is not connected and is a disjoint union of $W_1$ and $W_2$. 
The projection $E \times_{E/B} V \to V$ is flat since $\pi$ is flat by the previous lemma. 
So, each $W_i$ projects dominantly to $V$ by \cite[Lemma 4.3.7]{qingliu} (here we use the irreducibility of $V$).
Similarly, $V'$ projects to $V$ dominantly, and the fibered products $W_i \times_V V'$ are nonempty.

Then we have $E \times_{E/B} V' \cong E \times_{E/B} V \times_V V' \cong (W_1 \times_V V') \sqcup (W_2 \times_V V')$.
On the other hand, the map $V' \to V \to E/B$ can be rewritten as $V' \to U_i \to E/B$, so $E \times_{E/B} V'$ 
is connected by the first step, a contradiction.
\end{proof}
Denote the character group of $B$ by $\X(B)$. 
\begin{lemma}\label{productinvertiblesplitting}
If $W$ is a connected and integral variety over $K$, then any invertible function on $B \times W$ 
can be uniquely written as $\lambda f$, where $\lambda \in \X(B)$, 
and $f$ is an invertible function on $W$.
\end{lemma}
\begin{proof}
Follows from the structure of $B$ as of an abstract variety again.
\end{proof}
(The statement for direct products with $K$ or $K^*$ is quite straightforward, but there also not so obvious related known results for direct 
products under different conditions, 
cf.\ \cite[Lemma 10]{colliotthelenesansuc}, \cite[Lemma 2.1]{fossumiversen}, or \cite[Theorem 2]{rosenlicht}.)

Now, let us construct a sequence of four \'etale sheaves on $E/B$ and three maps between them.
For the last two maps, we will follow the ideas of \cite[Lemma 6.4]{sansuc},
but for the convenience of readers, we present the whole construction here.

First, given an \'etale map $V \to E/B$ and an invertible function $f$ on $V$, 
consider the projection $E \times_{E/B} V \to V$ (here, for the fibered product, we use $\pi$ for the map $E \to E/B$).
We can compose this projection with $f$ and get an invertible function on $E \times_{E/B} V$.
This defines a map of \'etale sheaves $\mathbf G_{m, E/B} \to \pi_* \mathbf G_{m,E}$.
Denote this map by $s_1$.

Second, using 
$\varphi_B \colon B \times E \to E$ 
instead of $\pi$, we similarly get a 
map of \'etale sheaves 
$\mathbf G_{m, E} \to \varphi_{B*} \mathbf G_{m,B \times E}$ on $E$. 
Denote by $s_2$ 
the $\pi$-pushforward of this map. 

Third, we are going to construct a map from 
$\pi_* \varphi_{B*} \mathbf G_{m, B \times E}$
to the 
constant sheaf 
$\uline{\X(B)}$.
If we denote by $\pr_2$ the projection to the second factor $B \times E \to E$, then 
$\pi \circ \pr_2 = \pi \circ \varphi_B$
by $B$-invariance of $\pi$.
So, for any \'etale map $V \to E/B$, we have a canonical isomorphism $(B \times E) \times_{E/B} V \cong B \times (E \times_{E/B} V)$
(where, in the first fibered product, we use 
$\pi \circ \pr_2 = \pi \circ \varphi_B$ 
for the map $B \times E \to E/B$).

If $V$ is connected, then $E \times_{E/B} V$ is also connected 
by Lemma \ref{aboveconnected}
and integral by Proposition \ref{basicetaledowngrade} (\ref{etalecoverintegral})
applied to the \'etale map $E \times_{E/B} V \to E$.
By Lemma \ref{productinvertiblesplitting}, 
any invertible function on $B \times (E \times_{E/B} V)$ can be uniquely written as $\lambda f$, where $\lambda \in \X(B)$, 
and $f$ is an invertible function on $E \times_{E/B} V$
So, we can define a map 
$s_3 \colon \pi_* \varphi_{B*} \mathbf G_{m, B \times E} \to \uline{\X(B)}$
by saying $s_3(\lambda f)=\lambda$.

With these maps of \'etale sheaves, let us prove two lemmas.
These two lemmas are similar to \cite[Proposition 6.10]{sansuc}, but are proved under slightly different conditions. 
While the variety with a group action here is $E$, which is, unlike in \cite[Proposition 6.10]{sansuc}, not at all arbitrary, 
the acting group here is $B$, which is not reductive 
(and in \cite[Proposition 6.10]{sansuc}, the acting group has to be reductive if we want to keep the base field arbitrary).
Also, \cite[Proposition 6.10]{sansuc} would require that $E$ be a $B$-torsor over $E/B$, while here we don't prove this 
(although in fact this is true) and don't use the definition of a torsor over anything other than a point at all.
Finally, we will still need an explicit description of an isomorphism (which we will establish in Lemma \ref{picardetale}) 
between $\X(B)$ and $\Pic(E/B)$ to check that it commutes with base field change, 
so it would not be possible to use \cite[Proposition 6.10]{sansuc} ``as a black box'' even if it allowed non-reductive groups.

\begin{lemma}
The sequence of \'etale sheaves on $E/B$
\begin{equation}\label{etaleshort}
0 \to \mathbf G_{m} \xrightarrow{s_1} \pi_* \mathbf G_{m,E} \xrightarrow{s_3 \circ s_2} \uline{\X(B)} \to 0
\end{equation}
is exact.
\end{lemma}
\begin{proof}
We check the exactness on the \'etale covering sieve generated by $\{U_i\}$. Let $f \colon V \to U_i$ be an \'etale map.
We have to verify the exactness of the sequence of sections on $V$, and it is enough to consider the case when $V$ is connected.
By Lemma \ref{directproductrestriction} (and Lemma \ref{bruhatcovering}), we have:
$E \times_{E/B} V \cong E \times_{E/B} U_i \times_{U_i} V \cong B \times U_i \times_{U_i} V \cong B \times V$.
These isomorphisms are $B$-equivariant and commute with the projection to $V$.
The rest is a direct computation (using Lemma \ref{productinvertiblesplitting} again, for $W= V$ this time).
\end{proof}

\begin{lemma}\label{picardetale}
The sequence (\ref{etaleshort}) induces an isomorphism between $\mathfrak X(B)$ and $\Pic(E/B)$.
\end{lemma}
\begin{proof}
We have the following piece of the long exact sequence of cohomology for (\ref{etaleshort}):
\begin{equation}\label{etalelong}
H^0(E/B,\mathbf G_{m}) \to H^0(E/B,\pi_* \mathbf G_{m,E}) \to H^0(E/B,\uline{\X(B)}) \to H^1(E/B,\mathbf G_{m}) \to H^1(E/B,\pi^* \mathbf G_{m,E}).
\end{equation}
In the second term, $H^0(E/B,\pi_* \mathbf G_{m,E}) = \mathcal O(E)^*$, 
all constant functions come from $H^0(E/B,\mathbf G_{m})$. 
And nonconstant invertible functions on $E$ don't exist, 
since any such function would give a nonconstant invertible function 
on $E_L \cong G_L$, a split simple algebraic group, which is not possible by \cite[Corollary 2.2]{fossumiversen}.
(More precisely, \cite[Corollary 2.2]{fossumiversen} requires the base field to be algebraically closed, but this is not a problem since 
a nonconstant invertible function on $G_L$ will stay nonconstant invertible if we extend $L$ to an algebraic closure.)
So, the morphism 
$H^0(E/B,\pi_* \mathbf G_{m,E}) \to H^0(E/B,\uline{\X(B)})$ 
in (\ref{etalelong}) is~0.

The last term, $H^1(E/B,\pi^* \mathbf G_{m,E})$, is the term $E_2^{1,0}$ of 
a Leray spectral sequence converging to $H^\bullet (E, \mathbf G_{m})$ (see \cite[II.3.4]{sga45}).
The differentials around $E_i^{1,0}$ in a spectral sequence are from $E_i^{1-i,i-1}$ and to $E_i^{1+i,-i+1}$, 
so at least one superscript is negative in each of these terms for $i \ge 2$. In this particular spectral sequence, 
we have $E_2^{j,k} = H^j (E/B,R^k \pi_* \mathbf G_{m,E})$, so the terms with at least one negative superscript are all zero.
So, $H^1(E/B,\pi^* \mathbf G_{m,E})$ is actually a subquotient (in fact, a subgroup) of $H^1 (E, \mathbf G_{m})$.

By \cite[Proposition II.2.3]{sga45}, $H^1 (E, \mathbf G_{m}) \cong \Pic(E)$. 
By Lemma \ref{picardexact} (\ref{picardtrueexact}), $\Pic(E)$ can be embedded into $\Pic(E_L) \cong \Pic(G_L)$.
By \cite[Corollary 18.24]{milne} (see also \cite[Proposition 21.64]{milne}), 
the Picard group of a simply connected split simple algebraic group is trivial, so 
$\Pic(G_L) = 0$, and $H^1(E/B,\pi^* \mathbf G_{m,E}) = 0$.
Therefore, in (\ref{etalelong}), we have an isomorphism between 
$H^0(E/B,\uline{\X(B)})=\X(B)$ 
and $H^1(E/B,\mathbf G_{m})$. By \cite[Proposition II.2.3]{sga45} again, 
$H^1(E/B,\mathbf G_{m}) \cong \Pic(E/B)$.
\end{proof}

\begin{proof}[(Last steps of the) Proof of Proposition \ref{propositionpicards}]
It suffices to check that the isomorphism in Lemma \ref{picardetale} commutes with field extensions, and this is done by a direct computation.
In more details, $H^1(E/B,\mathbf G_{m})$ can be interpreted as $\Ext^1_{E/B} (\underline{\ZZ}, \mathbf G_{m})$, 
and then the isomorphism $\Pic(E/B) \to H^1(E/B,\mathbf G_{m})$ can be written explicitly.
Namely, for a line bundle $\mathcal L$ on $E/B$ and for an \'etale map $f \colon V \to E/B$, where $V$ is connected and $f^* \mathcal L$ is trivial, 
we take the following extension of $\ZZ$ by 
$\mathbf G_{m,E/B} (V)=\mathcal O(V)^*$:
\begin{equation*}
\bigsqcup_{n \in \ZZ} \Isom (\mathcal O_V, f^* \mathcal L^{\otimes n}).
\end{equation*}
(If $f^* \mathcal L$ is not trivial, we simply take 
$\mathcal O(V)^*$
instead of the extension and map it to $0 \in \ZZ$: for a short sequence of 
sheaves to be exact, it is enough to have a short exact sequence of groups for each \'etale $f \colon V \to E/B$ from a covering sieve, 
not necessarily for a completely arbitrary \'etale $f \colon V \to E/B$.
In this case, the covering sieve consists of the $f$s such that $f^* \mathcal L$ is trivial.)

Then we have to check the commutativity of the following diagram for any field extension $K'/K$
(here, we also interpret $H^0(E/B,\uline{\X(B)})$ as $\Hom_{E/B}(\underline{\ZZ}, \uline{\X(B)})$):
\begin{equation*}
\xymatrix{
\X(B_{K'}) \ar[r]^-{\sim} \ar@{=}[d] & \Hom_{(E/B)_{K'}}(\underline{\ZZ}, \uline{\X(B_{K'})}) \ar[r]^-{\sim} & 
  \Ext^1_{(E/B)_{K'}} (\underline{\ZZ}, \mathbf G_{m}) & \Pic ((E/B)_{K'}) \ar[l]_-{\sim} \\
\X(B) \ar[r]^-{\sim} & \Hom_{E/B}(\underline{\ZZ}, \uline{\X(B)}) \ar[r]^-{\sim} & 
  \Ext^1_{E/B} (\underline{\ZZ}, \mathbf G_{m}) & \Pic (E/B) \hbox to 0pt{\hspace{1.5em}.\hss} \ar[l]_-{\sim} \ar[u]
}
\end{equation*}
To check the commutativity, we fix a character $\lambda \in \X(B)$ and the line bundle $\mathcal L$ on $E/B$ corresponding to $\lambda$
via the bottom line (in other words, we have an isomorphism between the two extensions $\mathcal F$ and $\mathcal G$ 
of $\underline{\ZZ}$ by 
$\mathbf G_{m}$ on $E/B$
corresponding to $\lambda$ and to $\mathcal L$). 
Then we use this isomorphism to establish an isomorphism between the two extensions $\mathcal F'$ and $\mathcal G'$ of 
$\underline{\ZZ}$ by 
$\mathbf G_{m}$ on $(E/B)_{K'}$
corresponding to $\lambda$ and to $\mathcal L_{K'}$.
Namely, for any \'etale $f \colon V \to E/B$ such that $f^* \mathcal L$ is trivial, we check that 
$\mathcal F'(V_{K'})$ (resp.\ $\mathcal G'(V_{K'})$) can be treated as a pushout 
of 
$\mathbf G_{m,(E/B)_{K'}} (V_{K'}) = \mathcal O(V_{K'})^*$ 
and $\mathcal F(V)$ (resp.\ $\mathcal G(V)$). 
Also, for any \'etale $g \colon W \to V$ over $K$, $\mathcal F(W)$ (resp.\ $\mathcal G(W)$) is a pushout of 
$\mathbf G_{m,E/B} (W)$
and $\mathcal F(V)$ (resp.\ $\mathcal G(V)$). 
A similar equality holds for any \'etale $g' \colon W' \to V_{K'}$ over $K'$ 
(here, $W'$ is a scheme over $K'$, it does not have to be the extension of scalars of anything over $K$).
Then we 
use standard properties of pushout and 
the axiom of an (\'etale) sheaf.
Further details omitted.
\end{proof}

\end{document}